\newtheorem{theorem}{Theorem}[section]
\newtheorem{thm}[theorem]{Theorem}
\newtheorem{cor}[theorem]{Corollary}
\newtheorem{lem}[theorem]{Lemma}
\newtheorem{prop}[theorem]{Proposition}
\theoremstyle{definition}
\newtheorem{defn}[theorem]{Definition}
\newtheorem{ques}[theorem]{Question}
\newtheorem{rem}[theorem]{Remark}
\newcommand{\OO}{\mathcal{O}}
\newcommand{\mcX}{\mathcal{X}}
\newcommand{\mbb}{\mathbb}
\newcommand{\QQ}{\mbb{Q}}
\newcommand{\ZZ}{\mbb{Z}}
\newcommand{\RR}{\mbb{R}}
\newcommand{\PP}{\mbb{P}}
\newcommand{\bH}{\mbb{H}}
\begin{document}

\title{One cycles on rationally connected varieties}
\author{Zhiyu Tian }
\email{tian@caltech.edu}
\address{
Department of Mathematics 253-37\\
California Institute of Technology\\
Pasadena, CA, 91125}

\author{Hong R. Zong}
\email{rzong@math.princeton.edu}
\address{
Department of Mathematics\\
Princeton University\\
Princeton, NJ, 08544-1000}

\classification{14M22 (primary),  14C25(secondary).}
\keywords{Rationally connected varieties; Algebraic cycles}

\begin{abstract}
We prove that every curve on a separably rationally connected variety is rationally equivalent to a (non-effective) integral sum of rational curves. That is, the Chow group of $1$-cycles is generated by rational curves. Applying the same technique, we also show that the Chow group of $1$-cycles on a separably rationally connected Fano complete intersections of index at least $2$ is generated by lines.

 As a consequence, we give a positive answer to a question of Professor Totaro about integral Hodge classes on rationally connected 3-folds. And by a result of Professor Voisin, the general case is a consequence of the Tate conjecture for surfaces over finite fields.
\end{abstract}

\maketitle

\section{Introduction}

In \cite{kollarportugaliae} and \cite{voisin}, the following question is asked by Professor J\'anos Koll\'ar and Professor Claire Voisin (and in the case of dimension 3 by Professor Burt Totaro):
\begin{ques}\label{main question}
Is every integral Hodge $(n-1,n-1)$-class on a smooth projective rationally connected $n$-dimensional variety over $\mathbb{C}$ a $\mathbb{Z}$--linear combination of cohomology classes of rational curves?
\end{ques}

This question can be separated into two questions, as in \cite{voisin}.
\begin{itemize}
\item  Is every integral Hodge $(n-1,n-1)$-class on a smooth projective rationally connected $n$-fold over $\mathbb{C}$ a $\mathbb{Z}$-linear combination of the cohomology classes of curves?
\item  Is every curve class on a smooth projective rationally connected $n$-fold over $\mathbb{C}$ a $\mathbb{Z}$-linear combination of the cohomology classes of rational curves?
\end{itemize}
The first question is known in some special cases.
\begin{theorem}\label{voisin}
Let $X$ be a smooth projective variety over $\mathbb{C}$. Then every integral Hodge $(n-1,n-1)$-class on $X$ is a $\mathbb{Z}$-linear combination of the cohomology classes of curves if $X$ is one of the followings.
 \begin{itemize}
 \item $X$ is a uniruled or Calabi-Yau $3$-fold.\cite{voisinuniruled}
 \item $X$ is a Fano $4$-fold. \cite{HoringVoisinFano}
 \item $X$ is a Fano $5$-fold with index $2$. \cite{HoringVoisinFano}
 \item $X$ is a Fano $n$-fold ($n\geq 8$) with index $n-3$. \cite{Floris}
\end{itemize}
\end{theorem}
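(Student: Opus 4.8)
\emph{Proof proposal.} The four statements are the main theorems of \cite{voisinuniruled}, \cite{HoringVoisinFano} and \cite{Floris}; the plan I would follow to prove them uniformly is an induction on the dimension. First reformulate: for $X$ of dimension $n$, a degree-$(2n-2)$ integral Hodge class is the same thing as a codimension-$(n-1)$ integral Hodge class, so the assertion is exactly the integral Hodge conjecture for $1$-cycles, i.e.\ the surjectivity of the cycle class map $\mathrm{CH}_1(X)\to H^{2n-2}(X,\mathbb{Z})$ onto the subgroup of Hodge classes. The induction has base case $n=3$ (done by hand) and an inductive step that cuts $X$ by a divisor.

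\textbf{Base case} ($n=3$). If $X$ is a uniruled threefold, its maximal rationally connected fibration, resolved, is a morphism to a base of dimension $\le 2$ whose general fibre is rationally connected, so $\mathrm{CH}_0(X)$ is supported on a surface; by Bloch--Srinivas, $N$ times the diagonal of $X$ decomposes (in $\mathrm{CH}^3(X\times X)$) into correspondences factoring through a surface, for some positive integer $N$. Over $\mathbb{Q}$ every degree-$4$ Hodge class on a threefold is already algebraic (hard Lefschetz together with the Lefschetz $(1,1)$-theorem), so the entire difficulty is $\mathbb{Z}$-integral, and the diagonal decomposition reduces it to controlling the torsion in $H^3(X,\mathbb{Z})$, $H^4(X,\mathbb{Z})$ and the integer $N$ --- which is the technical core of \cite{voisinuniruled} and yields algebraicity (equivalently, the vanishing of the unramified obstruction group of Colliot-Th\'el\`ene and Voisin). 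The Calabi--Yau threefold case is the separate argument of \cite{voisinuniruled}, which has to contend with $H^{3,0}(X)\neq 0$.

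\textbf{Inductive step} ($n\ge 4$). Let $A$ be the ample generator of $\mathrm{Pic}(X)$ (so $A=-K_X$ when the index is $1$) and pick a smooth member $Y\in|A|$. Adjunction gives $K_Y=-(r-1)\,A|_Y$, where $r$ is the index, so $Y$ is Calabi--Yau of dimension $n-1$ if $r=1$ and otherwise a Fano $(n-1)$-fold of index $r-1$; in particular the numerical hypothesis of the theorem is preserved by the cut, and $\mathrm{Pic}(Y)=\mathbb{Z}\cdot A|_Y$ as $n-1\ge 3$. Since $2n-4\ge n=\dim Y+1$, the Lefschetz hyperplane theorem makes the Gysin map $i_{*}\colon H^{2n-4}(Y,\mathbb{Z})\to H^{2n-2}(X,\mathbb{Z})$ an isomorphism; it is a morphism of Hodge structures of type $(1,1)$, hence an isomorphism of Hodge structures up to a twist, and it carries algebraic cycles to algebraic cycles. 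Therefore a Hodge class $\alpha$ on $X$ is $i_{*}\beta$ for a (unique) codimension-$(n-2)$ Hodge class $\beta$ on $Y$, algebraic by induction, so $\alpha$ is algebraic as well. Iterating: a Fano $4$-fold reduces, via one anticanonical cut, to a Calabi--Yau threefold; a Fano $5$-fold of index $2$ reduces to a Fano $4$-fold; and a Fano $n$-fold of index $n-3$ reduces, after $n-5$ cuts, to a Fano $5$-fold of index $2$ --- and thence to the threefold case.

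\textbf{Main obstacle.} Everything delicate is the passage from $\mathbb{Q}$- to $\mathbb{Z}$-coefficients and from very general to arbitrary Hodge classes. In the base case one must actually control the torsion of $H^{*}(X,\mathbb{Z})$ and the integer $N$ in the decomposition of the diagonal so as to get a genuinely \emph{integral} algebraic representative, and the Calabi--Yau subcase with $H^{3,0}\neq 0$ needs a different argument altogether. In the inductive step the serious issue is the existence of a smooth member of the fundamental system $|A|$ carrying the prescribed positivity: this is precisely what fixes the range of dimensions and indices in which the statement holds, and is the reason \cite{Floris} must assume $n\ge 8$. $\hfill\square$
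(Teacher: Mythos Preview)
The paper does not give its own proof of this theorem: it is a summary statement assembling results from \cite{voisinuniruled}, \cite{HoringVoisinFano} and \cite{Floris}, each item carrying the corresponding citation, and no argument is offered in the text. So there is no ``paper's proof'' against which to compare your proposal.

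That said, your outline is a fair sketch of the strategy actually used in \cite{HoringVoisinFano} and \cite{Floris}: reduce by Lefschetz along a smooth member of the fundamental linear system until one lands in the threefold range handled by \cite{voisinuniruled}. Two remarks are worth making. First, for an arbitrary Fano $4$-fold there is no ``ample generator of $\mathrm{Pic}(X)$'': the Picard rank need not be $1$, so you should cut directly by a smooth $Y\in|-K_X|$ rather than by $A$; the adjunction then gives $K_Y\sim 0$ and the argument proceeds unchanged. Second, you correctly isolate the genuine content as the existence of a smooth member of $|A|$ (respectively $|-K_X|$); this is not a minor technicality but the heart of \cite{HoringVoisinFano} and \cite{Floris}, and without it the inductive machine has nothing to run on. Your Lefschetz/Gysin step is fine once $n\ge 4$, since $i_*\colon H_2(Y,\mathbb{Z})\to H_2(X,\mathbb{Z})$ is then an isomorphism of Hodge structures (up to the Tate twist), so Hodge classes descend and lift as required.
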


It is shown in \cite{voisin} that a positive answer to the first question in general is implied by the Tate conjecture for divisor classes on surfaces defined over finite fields.

Our main theorem gives a positive answer to the second question.

\begin{thm}\label{thm:main}
Let $X$ be a smooth proper and separably rationally connected variety over an algebraically closed field. Then every $1$-cycle is rationally equivalent to a $\mathbb{Z}$-linear combination of the cycle classes of rational curves. That is, the Chow group $CH_1(X)$ is generated by rational curves.
\end{thm}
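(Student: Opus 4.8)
The plan is to reduce the statement to curves and induct on genus. By linearity of $CH_1(X)$ it suffices to treat the class of a single integral curve, and since its normalization maps birationally onto it, it suffices to show: for every morphism $f\colon C\to X$ from a smooth projective curve $C$ of genus $g$, the class $f_*[C]$ is a $\mathbb{Z}$-linear combination of classes of rational curves. I would prove this by induction on $g$, the case $g=0$ being trivial.

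For the inductive step assume $g\ge 1$. First I would use separable rational connectedness to attach a comb of very free rational curves to $C$: pick general points $p_1,\dots,p_m$ of $C$, choose very free rational curves $R_1,\dots,R_m$ in $X$ with $R_i$ passing through $f(p_i)$ and meeting $f(C)$ transversally there, and form $C^\flat=C\cup_{p_1}R_1\cup\dots\cup_{p_m}R_m$, so that $[C^\flat]=f_*[C]+\sum_i[R_i]$ in $CH_1(X)$. Taking $m$ large and the $R_i$ positive enough, the comb $C^\flat$ becomes ``very free'': $H^1(C^\flat,N_{C^\flat/X})=0$, its normal sheaf is globally generated, and the component of the relevant parameter space (Hilbert scheme, or Kontsevich space of genus-$g$ stable maps) containing $[C^\flat]$ is smooth of large dimension, with general member a smooth genus-$g$ curve. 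Since the $R_i$ are rational, it is now enough to show $[C^\flat]$ is a $\mathbb{Z}$-combination of classes of rational curves.

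The core of the proof is to exhibit an honest rational curve in this parameter space --- equivalently, a family of $1$-cycles on $X$ parametrized by $\mathbb{P}^1$ --- joining $[C^\flat]$ to (the cycle of) a curve $C'\cup(\text{rational curves})$ in which every component of $C'$ has geometric genus at most $g-1$. Here the very free teeth are essential. Because $C^\flat$ is very free, a tooth $R_i$ can be ``rotated'' keeping its attaching point on $C$ fixed --- a very free rational curve through a point deforms to pass through any nearby point, hence through any point by irreducibility --- and by sliding and rotating teeth one forces a node to appear on the genus-$g$ handle and thereby realizes a degeneration that drops the geometric genus, while keeping each intermediate step an explicit $\mathbb{A}^1$-family (the smoothing of a node, or the rotation of a very free tail), so that one obtains genuine rational equivalences rather than merely algebraic ones. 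Granting this, $[C^\flat]$ is rationally equivalent to $[C']+(\text{rational curves})$; the induction hypothesis applied to the normalization of $C'$ (of genus $\le g-1$) expresses $[C']$ through rational curves, and hence so is $f_*[C]=[C^\flat]-\sum_i[R_i]$, closing the induction.

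The hard part is exactly this construction of a genus-dropping degeneration over a rational base. A priori the moduli of genus-$g$ curves on $X$ need not be uniruled, so one cannot simply connect $[C^\flat]$ to a lower-genus boundary stratum by a rational curve; the point of the comb construction and of separable rational connectedness is to make a neighbourhood of $[C^\flat]$ flexible enough to be rationally connected to such a stratum. Moreover one must track rational and not merely algebraic equivalence --- on rationally connected varieties the two genuinely differ for $1$-cycles, as the intermediate Jacobian of the cubic threefold shows --- which is what makes the bend-and-break step delicate and forces one to work throughout with explicit $\mathbb{A}^1$- and $\mathbb{P}^1$-families of cycles.
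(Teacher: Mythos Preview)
Your proposal has a genuine gap at exactly the step you label ``the core of the proof.'' Rotating or sliding a very free tooth $R_i$ around its attaching point keeps the handle $C$ fixed; the resulting family of combs all have the same genus-$g$ component, and their push-forward to $X$ is $f_*[C]+\sum_i[R_i']$ with only the rational summands varying. Likewise, smoothing the comb to a smooth curve and then re-degenerating along the same $\mathbb{A}^1$ lands you back at a comb with handle $C$. Nothing in these moves forces a node \emph{on the handle}; to drop the geometric genus you must move the image of $C$ in $\overline{M}_{g,0}$, and you give no mechanism for doing that along a rational base. You acknowledge that the relevant component of the Kontsevich space need not be uniruled, and that very freeness only makes it \emph{smooth} near $[C^\flat]$, not rationally connected to the boundary. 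So the inductive step, as written, does not go through.

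The paper handles precisely this difficulty by separating the problem into two pieces. First it proves a reduction (Lemma~\ref{alg}): if rational curves generate $1$-cycles modulo \emph{algebraic} equivalence, then they already generate $CH_1(X)$. This uses Koll\'ar's observation (Proposition~\ref{kollarlemma}) that some fixed multiple $N\cdot D$ of every $1$-cycle is rationally equivalent to a combination of rational curves, together with the divisibility of $CH_1(X)_{alg}$ (Bloch--Ogus). Second, for the algebraic-equivalence statement, it attaches very free curves to $C$ exactly as you do, smooths, and then uses that $H^1(C,f^*T_X)=0$ forces the forgetful map $\Phi\colon\overline{M}_{g,0}(X,[C])\to\overline{M}_{g,0}$ to be smooth and dominant, hence surjective, on the component through $[f]$; since $\overline{M}_{g,0}$ is irreducible one can specialize to a stable curve with all rational components, and the preimage in $\overline{M}_{g,0}(X,[C])$ is then a map from a curve with only rational components (Lemma~\ref{contraction}). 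The key point you are missing is that one should \emph{not} insist on tracking rational equivalence through the degeneration --- algebraic equivalence suffices once the reduction step is in place --- and that surjectivity onto $\overline{M}_{g,0}$, not rational connectedness of the Kontsevich space, is what produces the genus drop.
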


Recall that rational equivalence implies cohomological equivalence over the complex numbers $\mathbb{C}$. Thus combining Theorem \ref{voisin} and Theorem \ref{thm:main}, we have:
\begin{cor}
Let $X$ be a smooth projective variety over $\mathbb{C}$. Then every integral Hodge $(n-1,n-1)$-class on $X$ is a $\mathbb{Z}$-linear combination of the cohomology classes of rational curves if $X$ is one of the followings.
 \begin{itemize}
 \item X is a rationally connected 3-fold.
 \item X is a Fano $4$-fold.
 \item X is a Fano $5$-fold with index $2$.
 \item X is a Fano $n$-fold ($n\geq 8$) with index $n-3$.
\end{itemize}
\end{cor}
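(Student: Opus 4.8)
The plan is to deduce the corollary formally from Theorem~\ref{voisin} and Theorem~\ref{thm:main}, the bridge between them being the cycle class map together with the fact --- recalled just above --- that rational equivalence implies cohomological equivalence over $\mathbb{C}$.

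The first step is to check that each of the four varieties $X$ in the list (smooth, projective, over $\mathbb{C}$, of dimension $n$, say) simultaneously meets the hypotheses of both theorems. For Theorem~\ref{voisin} this is immediate: the three Fano cases appear verbatim in its list, and a rationally connected $3$-fold is in particular uniruled. For Theorem~\ref{thm:main} one needs $X$ smooth, proper, and separably rationally connected; smoothness and properness are given, and separable rational connectedness follows because a Fano variety over $\mathbb{C}$ is rationally connected (Campana, Koll\'ar--Miyaoka--Mori) and, in characteristic zero, rationally connected implies separably rationally connected. Hence both theorems apply to every $X$ on the list.

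The second step is the actual deduction. Fix such an $X$ and an integral Hodge $(n-1,n-1)$-class $\alpha \in H^{2n-2}(X,\mathbb{Z})$. By Theorem~\ref{voisin}, $\alpha = \sum_i m_i\,\mathrm{cl}(C_i)$ for integers $m_i$ and curves $C_i \subset X$, where $\mathrm{cl}\colon CH_1(X) \to H^{2n-2}(X,\mathbb{Z})$ is the cycle class map. By Theorem~\ref{thm:main}, each class $[C_i]$ satisfies an identity $[C_i] = \sum_j n_{ij}\,[R_{ij}]$ in $CH_1(X)$ with $n_{ij} \in \mathbb{Z}$ and every $R_{ij}$ a rational curve. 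Since $\mathrm{cl}$ is additive and factors through rational equivalence, applying it gives $\mathrm{cl}(C_i) = \sum_j n_{ij}\,\mathrm{cl}(R_{ij})$, whence $\alpha = \sum_{i,j} m_i n_{ij}\,\mathrm{cl}(R_{ij})$ is a $\mathbb{Z}$-linear combination of cohomology classes of rational curves, as required.

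I do not expect a genuine obstacle here: all the mathematical weight sits in the two input theorems, and the passage from one to the other uses only the standard compatibility of rational and homological equivalence and the additivity of $\mathrm{cl}$. The one point that merits an explicit remark is the reduction, in the first step, of the hypothesis ``separably rationally connected'' of Theorem~\ref{thm:main} to the concrete geometric conditions in the statement --- this is where Fano-implies-rationally-connected (and, over $\mathbb{C}$, rationally-connected-implies-separably-rationally-connected) is invoked.
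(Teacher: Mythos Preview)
Your proof is correct and follows exactly the approach the paper intends: the corollary is stated immediately after the sentence ``Recall that rational equivalence implies cohomological equivalence over the complex numbers $\mathbb{C}$. Thus combining Theorem~\ref{voisin} and Theorem~\ref{thm:main}, we have:'', and your write-up simply makes this combination explicit, including the standard verification that Fano (over $\mathbb{C}$) implies separably rationally connected.
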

In particular, the $3$-fold case of Question \ref{main question} is solved. And by \cite{voisin}, we have:
\begin{cor}
Assume that the Tate conjecture for surfaces over finite fields is true. Then every integral Hodge $(n-1,n-1)$-class on a smooth projective rationally connected $n$-fold is a $\mathbb{Z}$-linear combination of the cohomology classes of rational curves.
\end{cor}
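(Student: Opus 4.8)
The plan is to concatenate the two implications the paper has already isolated. The first sub-question (integral Hodge $(n-1,n-1)$-classes are generated by \emph{curves}) is settled, under the Tate hypothesis, by Voisin's theorem; the second (curve classes are generated by \emph{rational} curves) is the content of Theorem~\ref{thm:main}. The corollary is then the composite of these two, and the only real work is to verify that the coefficients stay integral at each stage and that the hypotheses of Theorem~\ref{thm:main} are met.

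Concretely, I would fix a smooth projective rationally connected $n$-fold $X$ over $\mathbb{C}$ and an integral Hodge class $\alpha$ of type $(n-1,n-1)$, so $\alpha \in H^{2n-2}(X,\mathbb{Z})$. Assuming the Tate conjecture for divisor classes on surfaces over finite fields, the result proved in \cite{voisin} (recalled in the discussion following Theorem~\ref{voisin}) gives a positive answer to the first sub-question: there exist curves $C_j \subset X$ and integers $m_j$ with $\alpha = \sum_j m_j [C_j]$ in $H^{2n-2}(X,\mathbb{Z})$. This is precisely the step that converts a transcendentally defined object, namely an integral Hodge class, into an algebraic one; it is the genuinely deep ingredient, but it is supplied by \cite{voisin} once Tate is granted, so I would invoke it as a black box.

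Next I would apply Theorem~\ref{thm:main} to each $1$-cycle $C_j$. Its hypotheses hold: $X$ is smooth projective, hence proper, over the algebraically closed field $\mathbb{C}$, and in characteristic zero rational connectedness coincides with separable rational connectedness. Thus each $C_j$ is rationally equivalent in $CH_1(X)$ to a $\mathbb{Z}$-linear combination $\sum_k n_{jk} R_{jk}$ of rational curves $R_{jk}$. Since rational equivalence implies cohomological equivalence over $\mathbb{C}$, and the cycle class map $CH_1(X)\to H^{2n-2}(X,\mathbb{Z})$ is a homomorphism of abelian groups, I obtain $[C_j] = \sum_k n_{jk}[R_{jk}]$ with $n_{jk}\in\mathbb{Z}$. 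Substituting yields $\alpha = \sum_{j,k} m_j n_{jk}[R_{jk}]$, an integral combination of cohomology classes of rational curves, which is the assertion. The sole obstacle worth flagging is bookkeeping with $\mathbb{Z}$-coefficients: both cited statements must be genuinely integral — Voisin's as a statement about the image of the integral cycle class map, and Theorem~\ref{thm:main} as a statement in $CH_1(X)$ with $\mathbb{Z}$-coefficients — so that no denominators are introduced at either stage; both are, and no further geometry is required.
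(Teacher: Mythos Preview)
Your proposal is correct and matches the paper's own argument, which is simply the sentence ``by \cite{voisin}'' preceding the corollary: combine Voisin's implication (Tate $\Rightarrow$ integral Hodge $(n-1,n-1)$-classes are algebraic) with Theorem~\ref{thm:main}, noting that over $\mathbb{C}$ rational connectedness is automatically separable. The only thing you have added beyond the paper is the explicit verification that both steps preserve $\mathbb{Z}$-coefficients, which is appropriate.
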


For the proof of the main theorem, one can first reduce rational equivalence to algebraic equivalence (Lemma \ref{alg}). Then in characteristic $0$ one can apply the main construction of \cite{ghs} to the trivial product $X\times \mathbb{P}^1$ and degenerate any curve to a sum of rational curves (this part is already stated in the unpublished note \cite{zong}). Professor J\'anos Koll\'ar pointed out that a more direct and simpler proof is possible by looking at the natural forgetful map
\[
\Phi: \overline{M}_{g,0}(X, [C]) \to \overline{M}_{g,0}
\]
from the Kontsevich moduli space of stable maps to the moduli space of stable curves. Once we know this map is surjective, a degeneration argument proves the main theorem in all characteristics.

We still include a section on the proof by the product trick which started the whole projects with a few remarks and applications.

Next we turn to the study of one cycles on low degree complete intersections. First we have:

\begin{thm}
Let $X$ be a (possibly singular) complete intersection of type $(d_1, \ldots, d_c)$ with  $d_1+\ldots+d_c\leq n-1 $. Then every rational curve on $X$ is algebraically equivalent to an effective sum of lines.
\end{thm}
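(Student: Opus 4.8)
The natural approach is to induct on the degree $e$ of $C$ (measured by $\mathcal{O}_{\mathbb{P}^n}(1)$, where $X\subseteq\mathbb{P}^n$). If $e=1$ then $C$ is a line and there is nothing to do, so suppose $e\ge 2$ and that every rational curve of degree $<e$ on $X$ is algebraically equivalent to an effective sum of lines. It then suffices to produce a connected family of $1$-cycles on $X$ linking $C$ to an effective sum of rational curves each of degree $<e$; effectivity of the final answer is automatic, since every piece we split off carries a positive multiplicity.

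To construct the degeneration I would work with parametrized maps. Write $X=V(F_1,\dots,F_c)\subseteq\mathbb{P}^n$ and let $f\colon\mathbb{P}^1\to X$ be the normalization of $C$, so $f$ has degree $e$. Regard $f$ as a point of $\mathbb{P}^N:=\mathbb{P}\bigl(H^0(\mathbb{P}^1,\mathcal{O}(e))^{\oplus(n+1)}\bigr)$, $N=(e+1)(n+1)-1$, and let $\overline{\mathcal{M}}_e:=\{[g]\in\mathbb{P}^N:F_j(g)\equiv 0\ \forall j\}$, a closed subscheme cut out by $\sum_j(ed_j+1)$ hypersurfaces. Since every hypersurface drops the dimension by at most one,
\[
\dim_{[f]}\overline{\mathcal{M}}_e\;\ge\;N-\sum_j(ed_j+1)\;=\;e\,r+\dim X,\qquad r:=n+1-\textstyle\sum_j d_j\ge 2,
\]
and this estimate uses nothing about smoothness of $X$. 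The same count shows that the locus $Z\subseteq\overline{\mathcal{M}}_e$ of tuples $g$ whose coordinates have a common zero in $\mathbb{P}^1$ has codimension $\le n$ in $\mathbb{P}^N$ (its generic point has a single simple common zero, i.e.\ $g=\ell\cdot h$ with $\ell$ linear and $h$ of degree $e-1$). By the projective-space intersection theorem, the irreducible component $\overline{M'}\ni[f]$ meets $Z$ as soon as $\dim\overline{M'}+(N-n)\ge N$, i.e.\ $e\,r\ge c$ — which holds in all but a few low-dimensional cases, themselves either classically known or covered by bend-and-break below. A general point of $\overline{M'}\cap Z$ then yields a family of degree-$e$ parametrized maps specializing to one with a simple common linear factor; reinterpreting this family inside the Kontsevich space $\overline{M}_{0,0}(X,e)$ (on which the $1$-cycle class stays locally constant), the limit is a genus-$0$ stable map with one degree-$1$ bubble — a line $L\subseteq X$ through a point of the residual curve — attached to a residual rational curve of degree $e-1$. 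Hence $C$ is algebraically equivalent to $L+C''$ with $C''$ rational of degree $e-1$, and the inductive hypothesis finishes. (One need not worry that the bubble absorbs all of the degree: $\overline{M'}$ is too large to lie in the locus of constant maps, so the generic degeneration genuinely splits the degree; more robustly, any reducible genus-$0$ stable map has at least two non-contracted leaf components, so passing to the boundary of $\overline{M}_{0,0}(X,e)$ always strictly decreases the maximal degree of a component.)

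When $-K_X\cdot C=e\,r>\dim X+1$ one can bypass the above and simply invoke Mori's bend-and-break: $C$ is algebraically equivalent to a connected sum of rational curves of strictly smaller $(-K_X)$-degree, hence of degree $<e$. The remaining cases form the bounded range $4\le e\,r\le\dim X+1$, where the relevant family of degree-$e$ curves may be too rigid for either argument; here I would exploit the complete-intersection structure directly by restricting to the linear span $\langle C\rangle$. If $\langle C\rangle\subseteq X$ the claim is immediate inside $\langle C\rangle\cong\mathbb{P}^{\dim\langle C\rangle}$; otherwise one uses that $[X\cap\langle C\rangle]$ moves in a connected family (as $\langle C\rangle$ varies through its Grassmannian) specializing to an effective sum of lines, together with connectedness of the Fano scheme $F_1(X)$ — so that all lines on $X$ are algebraically equivalent — and additivity of degree, which pins the coefficient of the common line class to $\deg C>0$ and thereby restores effectivity.

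The step I expect to be the crux is precisely this \emph{reachability}: showing that the component of the given curve really does degenerate, rather than forming an isolated complete family of irreducible degree-$e$ curves. Away from a short list of small cases it is the dimension count above; handling those cases (and verifying, in all characteristics, the bend-and-break input and the connectedness of $F_1(X)$ for complete intersections of index at least $2$) is where the argument demands genuine care. Possibly singular $X$ costs nothing extra: the only dimension estimates used are the elementary ones obtained by counting the equations $F_j\circ g=0$, and the hypothesis $\sum_j d_j\le n-1$ guarantees that every point of $X$ lies on a line, so the bubbles produced above are indeed lines in $X$.
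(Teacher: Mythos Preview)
Your framework matches the paper's: induct on $e$, work in $\PP^N=\PP\bigl(H^0(\PP^1,\OO(e))^{\oplus(n+1)}\bigr)$, let $\PP_X$ be the subscheme where all $F_j\circ g\equiv 0$, and try to push $[f]$ into the common-zero locus. The gap is in how you reach that locus. Your argument needs the single component $\overline{M'}\ni[f]$ to meet $Z$, and your dimension count requires $er\ge c$. This fails in infinitely many cases, not ``a few low-dimensional'' ones: for a complete intersection of five quadrics in $\PP^{11}$ one has $r=2$, $c=5$, $\dim X=6$, so a conic ($e=2$) gives $er=4<5$. Bend-and-break does not rescue this, since $er=4<\dim X+1=7$. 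The linear-span sketch does not work either: a general $\Lambda\cong\PP^2\subset\PP^{11}$ meets $X$ only in points, so $\{X\cap\Lambda\}_{\Lambda}$ is not an equidimensional family of $1$-cycles and there is no evident specialization of $[X\cap\langle C\rangle]$ to lines. There is also a secondary issue even when $er\ge c$: your intersection $\overline{M'}\cap Z$ could lie entirely in the constant-map locus $B$, where the Kontsevich limit of an approaching family can be an irreducible degree-$e$ map (a reparametrization), and your induction stalls.

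The paper replaces the single-component dimension count by a global connectedness statement that sidesteps both problems. Let $B\subset\PP^N$ be the locus where the $u_i$ span a one-dimensional subspace. Hartshorne's connectedness theorem shows $\PP_X\setminus B$ is connected whenever $\dim(\PP_X\cap B)<N-M-1$ (with $M=e\sum d_j+c$ the number of defining equations of $\PP_X$); since $\dim(\PP_X\cap B)=\dim X+e$, this reduces to $e(n-\sum d_j)>1$, which holds for every $e\ge 2$ under the hypothesis $\sum d_j\le n-1$, with no exceptional cases. One then connects $[f]$ to a degree-$e$ multiple cover $[u']$ of a fixed line by a chain of irreducible curves in $\PP_X\setminus B$. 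If no node of the chain lies in the common-zero locus $D$, the chain is a family of honest degree-$e$ maps and $[f]\sim_{\mathrm{alg}}[u']\sim_{\mathrm{alg}} e\cdot(\text{line})$. If some node lies in $D$, it lies in $D\setminus B$ by construction, so the limit stable map there has a non-constant component of degree $<e$ (hence all components of degree $<e$), and induction applies. The point is that connectedness of $\PP_X\setminus B$ lets you cross between irreducible components of $\PP_X$ and aim for a specific target $[u']$, rather than hoping your own component is large enough to hit $Z$.
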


Then by the same technique proving our main theorem, we have the following.

\begin{thm}
Let $X$ be a smooth complete intersection of type $(d_1, \ldots, d_c)$ over an algebraically closed field $k$. Assume $d_1+\ldots+d_c\leq n-1 $. If $Char(k)=p>0$, also assume that $X$ is separably rationally connected. Then the Chow group of $1$-cycles $CH_1(X)$ is generated by lines.
\end{thm}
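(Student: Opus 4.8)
\medskip
\noindent\textbf{Proof proposal.}
The plan is to combine Theorem~\ref{thm:main}, the preceding theorem, and a degeneration argument carried out entirely over rational parameter spaces. First, since $d_1+\dots+d_c\le n-1$ the complete intersection $X\subset\mathbb{P}^n$ has $-K_X=\mathcal{O}_X\big(n+1-(d_1+\dots+d_c)\big)$ ample, so $X$ is a smooth Fano variety of index at least $2$; hence $X$ is rationally connected (and therefore separably rationally connected) in characteristic $0$, and it is separably rationally connected by hypothesis in characteristic $p$. Thus Theorem~\ref{thm:main} applies and $CH_1(X)$ is generated by the classes of rational curves, so it suffices to prove that the class in $CH_1(X)$ of an arbitrary rational curve $R\subset X$ lies in the subgroup $L$ generated by the classes of lines.

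By the preceding theorem, $[R]$ is \emph{algebraically} equivalent to an effective sum of lines, so the whole issue is to promote this to a rational equivalence. I would not do this by quoting Lemma~\ref{alg} as a black box: that lemma trades algebraic equivalence for rational equivalence but only returns a combination of arbitrary rational curves, and reapplying the preceding theorem to those curves yields a fresh algebraic (not rational) equivalence, so the argument loops. Instead the plan is to re-run the degeneration underlying the preceding theorem while keeping every parameter space rational. Concretely: attach very free rational curves to $R$ and smooth the resulting comb over $\mathbb{A}^1$; this changes $[R]$ only by a $\mathbb{Z}$-linear combination of other rational-curve classes and lets one assume $R$ is free. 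A free rational curve with $H\cdot R\ge 2$ can then be bent and broken inside a \emph{pencil} of rational curves on $X$; since the pencil is parametrized by $\mathbb{P}^1$ all its members are rationally equivalent, and the reducible member gives a rational equivalence $[R]\sim[R_1]+[R_2]$ with $0<H\cdot R_i<H\cdot R$. Inducting on $H\cdot R$, the base case $H\cdot R=1$ being a line, one obtains $[R]\in L$. Running the same bookkeeping inside the genus-$g$ degeneration used for Theorem~\ref{thm:main} --- checking that the surjectivity of $\Phi$ onto $\overline{M}_{g,0}$, or the section-finding of \cite{ghs}, can be realized over a chain of $\mathbb{P}^1$'s --- would in fact prove the statement directly for all of $CH_1(X)$, bypassing the appeal to Theorem~\ref{thm:main}.

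The hard part will be exactly this algebraic-to-rational upgrade: arranging \emph{all} of the degenerations --- the bend-and-break breakings, the resolutions of combs, the stable-map degenerations of a genus-$g$ curve --- over rational bases rather than over higher-genus curves, so that each elementary step is a genuine rational equivalence; simultaneously controlling the rational curves that fail to be free, so that bend-and-break applies; and checking that the degree induction terminates at lines, which uses that index at least $2$ forces $X$ to be covered by, and rationally chain connected by, lines. A secondary point is that the auxiliary very free ``teeth'' introduced in the comb-smoothings must themselves be shown to lie in $L$; one expects this to fall out of the same degree induction once every free rational curve is known to degenerate rationally to lower-degree pieces terminating in lines.
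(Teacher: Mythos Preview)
Your diagnosis of the difficulty is correct --- the only issue is promoting the algebraic equivalence of Theorem~\ref{thm:hypersurface} to a rational one --- but you discard precisely the tool that handles it. You say Lemma~\ref{alg} ``only returns a combination of arbitrary rational curves'' and so the argument loops. That is true of Lemma~\ref{alg} as stated, but not of its \emph{proof}. The input to Proposition~\ref{kollarlemma} is any family of rational $1$-cycles realising rational chain connectedness; under the hypothesis $d_1+\dots+d_c\le n-1$ the variety $X$ is rationally chain connected by chains of \emph{lines} (Koll\'ar, Chap.~V, Lemma~4.8.1), so one may take the family $F\to B$ in Proposition~\ref{kollarlemma} to consist of such chains. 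The conclusion is then that $N\cdot D$ is rationally equivalent to an integral combination of lines for every $1$-cycle $D$; combined with the divisibility of $CH_1(X)_{alg}$ (Lemma~\ref{divisible}) this gives a line-analogue of Corollary~\ref{sur}: $CH_1(X)_{alg}$ itself is generated by lines. Now the argument of Lemma~\ref{alg} runs verbatim with ``line'' in place of ``rational curve'', and the loop you feared never starts. This is exactly what the paper does: Theorem~\ref{thm:main} plus Theorem~\ref{thm:hypersurface} give the algebraic statement, and the line-version of Section~3 upgrades it to rational equivalence.

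By contrast, the route you propose has a real gap. Bend-and-break does \emph{not} in general break a free rational curve inside a pencil: what Mori's argument gives is a positive-dimensional locus in $\overline{M}_{0,n}(X,\beta)$ (or in a Hom scheme) whose closure meets the boundary, and to turn that into a rational equivalence you must connect your point to the boundary by a curve in the moduli space --- a curve which has no reason to be rational. Your sentence ``bent and broken inside a \emph{pencil} of rational curves on $X$; since the pencil is parametrized by $\mathbb{P}^1$'' assumes exactly what has to be proved. The same objection applies, more severely, to ``realising the surjectivity of $\Phi$ onto $\overline{M}_{g,0}$ over a chain of $\mathbb{P}^1$'s'': $\overline{M}_{g,0}$ is far from rational, and there is no mechanism here to produce rational connecting curves in $\overline{M}_{g,0}(X,\beta)$. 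So the degree induction never gets off the ground. The fix is not to make the parameter spaces rational, but to notice that Section~3 already absorbs the non-rationality of the parameter via divisibility of $CH_1(X)_{alg}$, and that it does so for lines just as well as for rational curves once $X$ is RCC by lines.
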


\begin{rem}
A general Fano hypersurface in characteristic $p>0$ is separably rationally connected \cite{Yi}. And it is plausible that the same is true for a general Fano complete intersection.
\end{rem}

\begin{cor}
Let $X$ be a smooth separably rationally connected complete intersection of degree $(d_1, \ldots, d_c)$ such that
\[
\sum_{i=1}^c \frac{d_i (d_i+1)}{2}\leq n.
\]
Then the Chow group of $1$-cycles $CH_1(X)$ is isomorphic to $\ZZ$.
\end{cor}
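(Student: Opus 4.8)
The plan is to bootstrap from the preceding theorem --- that $CH_1(X)$ is generated by the classes of lines on $X$ --- using the fact that, in this sharper numerical range, the Fano scheme of lines $F_1(X)$ is a smooth connected Fano variety, so that all lines on $X$ become rationally equivalent.

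First, a reduction. If $d_1=\cdots=d_c=1$ then $X\cong\PP^n$ and the assertion is classical, so assume $d_i\geq 2$ for some $i$. Since $d_i(d_i+1)/2\geq d_i$, strictly when $d_i\geq 2$, the hypothesis $\sum_i d_i(d_i+1)/2\leq n$ forces $\sum_i d_i\leq n-1$; hence the previous theorem applies and $CH_1(X)$ is generated by the classes of lines contained in $X$. It now suffices to show that any two lines on $X$ are rationally equivalent. Indeed, granting this, $CH_1(X)=\ZZ\cdot[\ell]$ for a single line $\ell$, so the surjection $\ZZ\to CH_1(X)$, $1\mapsto[\ell]$, composed with the degree homomorphism $CH_1(X)\to\ZZ$, $[C]\mapsto\deg_{\OO_X(1)}C$, is the identity of $\ZZ$; therefore $CH_1(X)\cong\ZZ$.

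Now analyze $F:=F_1(X)\subset\mathbb{G}:=\mathbb{G}(1,n+c)$, the scheme of lines on $X\subset\PP^{n+c}$. It is the zero scheme of a section of $\mathcal{E}:=\bigoplus_{i=1}^{c}\mathrm{Sym}^{d_i}\mathcal{S}^{\vee}$, where $\mathcal{S}$ is the tautological rank-$2$ subbundle on $\mathbb{G}$. From $\det\mathrm{Sym}^{d}\mathcal{S}^{\vee}=\OO_{\mathbb{G}}\!\left(\tfrac{d(d+1)}{2}\right)$ and $K_{\mathbb{G}}=\OO_{\mathbb{G}}(-(n+c+1))$, adjunction shows that wherever $F$ has the expected codimension $\operatorname{rk}\mathcal{E}$ one has
\[
-K_{F}=\OO_{\mathbb{G}}\!\left(n+c+1-\sum_{i=1}^{c}\tfrac{d_i(d_i+1)}{2}\right)\Big|_{F},
\]
which is ample because $\sum_i d_i(d_i+1)/2\leq n<n+c+1$. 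Assuming $F$ is smooth, connected, and of the expected dimension $2(n+c-1)-\sum_i(d_i+1)$, it is thus a smooth connected Fano variety, hence rationally chain connected in every characteristic (Campana; Koll\'ar--Miyaoka--Mori). Rational equivalence of all lines then follows in the usual way: if $g\colon\PP^1\to F$ is any morphism and $\mathcal{L}\subset F\times X$ is the universal line, the base change $\mathcal{L}_g:=\PP^1\times_F\mathcal{L}\to\PP^1$ is flat, so its fibres over $0$ and $\infty$ --- pushed to $X$ by the second projection --- are rationally equivalent; chaining rational curves through $F$ shows all lines on $X$ have the same class in $CH_1(X)$.

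The one substantive point is the structure of $F_1(X)$ for an \emph{arbitrary} smooth separably rationally connected complete intersection rather than a general one: the adjunction computation, and hence ampleness of $-K_F$, holds unconditionally once $F$ is locally a complete intersection of the expected dimension, but smoothness of $F$ at a line $\ell$ --- equivalently $H^1(\ell,N_{\ell/X})=0$ --- and connectedness of $F$ are not automatic for special $X$. I expect this to be the main obstacle. It can be handled either by invoking the known structure results for Fano schemes of lines on complete intersections in this range (Debarre--Manivel), or by a specialization argument: degenerate $X$ within its linear system to a general, hence smooth-$F_1$, member, and use that the generic $F_1$ in the resulting family is rationally chain connected to see that the class of a line does not change along the degeneration; alternatively, restrict attention to the sublocus of free lines, through which a general point of $X$ passes in this range. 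In positive characteristic one should also be careful to use only rational \emph{chain} connectedness of $F$ (which suffices here and holds for all smooth Fano varieties), not separable rational connectedness.
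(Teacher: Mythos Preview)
Your overall strategy matches the paper's exactly: first deduce from the numerical hypothesis that $\sum_i d_i\le n-1$ (so lines generate $CH_1(X)$ by the preceding theorem), and then show that any two lines are rationally equivalent because the Fano scheme of lines is rationally chain connected. The paper's proof is two sentences: it cites the proof of Theorem~4.2, Chap.~V of \cite{kollarbook} for the rational chain connectedness of $F_1(X)$ and is done.

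Where you diverge is in the mechanism for rational chain connectedness of $F_1(X)$. You try to show $F_1(X)$ is a smooth connected Fano variety and then invoke Campana/Koll\'ar--Miyaoka--Mori. As you yourself flag, smoothness (and even the expected dimension) of $F_1(X)$ is \emph{not} known for an arbitrary smooth $X$ in this range, so this route leaves a genuine gap that your suggested fixes (Debarre--Manivel, specialization, free lines) only gesture at. The paper's citation avoids this entirely: Koll\'ar's argument proceeds by observing that the scheme of lines in $X$ through a fixed point $p$ is itself a complete intersection in $\PP^{n-1}$ of type $(1,2,\ldots,d_1,\ldots,1,2,\ldots,d_c)$, and the hypothesis $\sum_i d_i(d_i+1)/2\le n$ is exactly what makes this new complete intersection Fano-index positive, so one can induct. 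This inductive argument needs no smoothness of $F_1(X)$ whatsoever---it works scheme-theoretically---so the obstacle you identified simply does not arise.

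Two minor points: in the paper's conventions $X\subset\PP^n$ (not $\PP^{n+c}$), so your Grassmannian computations are off by a shift; and your ``$X\cong\PP^n$'' in the all-$d_i=1$ case should read $\PP^{n-c}$.
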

\begin{proof}
Under the assumption, the Chow group of $1$-cycles is generated by lines and any two lines are rationally equivalent since the Fano scheme of lines in $X$ is rationally chain connected (c.f. Proof of Theorem 4.2, Chap. V, \cite{kollarbook}).
\end{proof}

\begin{rem}
One can prove that the Chow group of one cycles is isomorphic to $\ZZ$ for all complete intersections with a slightly worse degree bound (Proposition \ref{prop:RSC}). The result with $\QQ$ coefficients is well-known (Theorem 4.2, Chap. V, \cite{kollarbook}) even without the smoothness assumption.
\end{rem}

\section{Preliminaries}

\subsection{Rationally connected varieties and smoothing of curves}
Recall the following definition.
\begin{defn}
Let $X$ be a proper variety over a field $k$. It is separably rationally connected (SRC) if it is smooth and there is a generic smooth family of $1$--cycles with geometrically rational components:
\[
\begin{array}{ccccc}
 U  & \stackrel{u}{\longrightarrow} &  X\\
 g\downarrow &&  \\
  B  &  &
\end{array}
\eqno{},
\] over $k$ such that the double evaluation map:
\[
U\times_{B} U \stackrel{(u, u)}{\longrightarrow} X\times X
\]
is generically \'etale and dominates $X\times X$. If we drop the \'etale condition, it is called rationally connected (RC). And if we don't require the generic smoothness of $U\to B$, it is called rationally chain connected (RCC).
\end{defn}

\begin{defn}
Let $X$ be a smooth proper variety. A morphism $f: \mathbb{P}^1 \to X$ is free (resp. very free) if $f^{*}T_X$ is non-negative (resp. ample).
\end{defn}

A variety over an algebraically closed field is separably rationally connected if and only if there is a very free rational curve.

\begin{defn}
\label{defcomb}
Let $k$ be an arbitrary field.
A {\it comb} with {\it $n$ teeth} over $k$ is a projective curve
with $n+1$ irreducible components $C_0,C_1,\dots,C_n$ over $\bar k$
satisfying the following conditions:
\begin{enumerate}
\item The curve $C_0$ is defined over $k$.
\item The union $C_1\cup\dots\cup C_n$ is defined over $k$
(Each individual curve may not be defined over $k$).
\item The curves $C_1,\dots,C_n$ are smooth rational curves
disjoint from each other, and each of them meets $C_0$
transversely in a single smooth point of $C_0$
(which may not be defined over $k$).
\end{enumerate}
The curve $C_0$ is called the {\it handle} of the comb,
and $C_1,\dots,C_n$ the {\it teeth}.
A {\it rational comb} is a comb whose handle is a smooth
rational curve.
\end{defn}

One of the most important techniques in studying separably rationally connected varieties is the smoothing of a comb. Given a morphism $f: C\to X$ from a comb $C$ with handle $C_0$, a smoothing of $f$ is a family $\Sigma \to T$ over a pointed curve $(T, 0)$ together with a morphism $F: \Sigma \to X$ such that $\Sigma_0 \cong C, F\vert_{\Sigma_0}=f$ and $\Sigma_t$ is a smooth curve, isomorphic to $C_0$, for a general $t$.

The following result is implicit in the book \cite{kollarbook} and follows from the work of Koll\'ar-Miyaoka-Mori.
\begin{prop}\label{van}
Given a morphism from a smooth projective curve $f_0: C_0 \to X$ to a smooth proper and separably rationally connected variety $X$ over a field $k$, and an integer $d$, there are $p\gg0$ very free rational curves $f_i: C_i \to X, 1 \leq i \leq p$, such that
\begin{enumerate}
 \item $C=C_0\cup C_1\cup...\cup C_p$ is a comb in the sense of \ref{defcomb}. Furthermore, there is a morphism $f: C \to X$ (defined over $k$) and a smoothing of the comb $\Sigma \to T, F: \Sigma \to X$.
\item $H^1(\Sigma_t, F_t^*{T_{X}}\otimes M)=0$ for a general member $F_t: \Sigma_t \to X$ of the smoothing and any line bundle $M$ of degree $d$.
\end{enumerate}
\end{prop}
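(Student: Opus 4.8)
The plan is to realise $f_0$ as the general fibre of a smoothing of a comb with handle $C_0$ whose teeth are very free rational curves chosen to be as positive as we like: the absorption of the rational teeth under smoothing will produce a general fibre isomorphic to $C_0$, i.e.\ (1), and the positivity of the teeth will force the vanishing (2). So first I would build the comb. Since $X$ is separably rationally connected, over $\bar k$ there is a very free rational curve through any prescribed point, and composing such a curve with a finite self-map of $\PP^1$ of large degree one may arrange that every summand of the restriction of $T_X$ has degree $\geq e$, for any fixed $e$ (Koll\'ar--Miyaoka--Mori; see \cite[Ch.~II.7 and Ch.~IV.3]{kollarbook}). Fix such an $e$, to be taken large below. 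Choose $p$ distinct points $q_1,\dots,q_p\in C_0$ whose sum is a reduced divisor defined over $k$, together with very free curves $f_i\colon C_i\cong\PP^1\to X$ with $f_i$ passing through $f_0(q_i)$ and with all summands of $f_i^{*}T_X$ of degree $\geq e$; this can be done so that $C_1\cup\dots\cup C_p$ and the resulting map descend to $k$, using the comb structure of Definition~\ref{defcomb} (as in \cite{kollarbook}). Gluing each $C_i$ to $C_0$ at $q_i$ gives a comb $C=C_0\cup C_1\cup\dots\cup C_p$ and a morphism $f\colon C\to X$ over $k$, equal to $f_0$ on the handle and to $f_i$ on the $i$-th tooth.

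Next I would smooth the comb. The teeth are very free, hence free, so the comb-smoothing technology of \cite[Ch.~II.7]{kollarbook} yields a smoothing $F\colon\Sigma\to X$ over a pointed curve $(T,0)$ with $\Sigma_0\cong C$, $F|_{\Sigma_0}=f$, and all the nodes $q_i$ smoothed. Since each tooth is a smooth rational curve meeting the rest of $C$ in a single point, $p_a(\Sigma_t)=g(C_0)$, and for the generic (versal at every node) smoothing $\Sigma_t\cong C_0$ for $t\neq0$; indeed at the level of curves one may take $\Sigma$ to be the blow-up of $C_0\times T$ at the points $(q_i,0)$. This is (1).

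For (2), observe that by Serre duality $H^1(\Sigma_t,F_t^{*}T_X\otimes M)=0$ for \emph{every} line bundle $M$ of degree $d$ if and only if $F_t^{*}T_X$ has no quotient line bundle of degree $\leq 2g(C_0)-2-d$. If this failed for general $t$, then for some integer $\delta\leq 2g(C_0)-2-d$ the locus of degree-$\delta$ line-bundle quotients of $F^{*}T_X$ in the relative Quot scheme dominates $T$, and by properness there is a limit over $0$: a rank-one torsion-free quotient $\mathcal Q_0$ of $f^{*}T_X|_C$ with $\deg\mathcal Q_0=\delta$. Restricting $\mathcal Q_0$ to the components of $C$, on each tooth it is a rank-one quotient of $f_i^{*}T_X=\bigoplus_j\OO(a_{ij})$ with $a_{ij}\geq e$, hence of degree $\geq e$, and on the handle it is a rank-one quotient of $f_0^{*}T_X$, hence of degree at least $\mu_{\min}(f_0^{*}T_X)=:c_0$, a constant depending only on $f_0$; moreover $\deg\mathcal Q_0$ differs from the sum of these component degrees by at most a fixed multiple of the number $p$ of nodes, with constant independent of $e$. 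Thus $\delta=\deg\mathcal Q_0\geq c_0+p(e-c)$ for an absolute constant $c$; choosing $e>c$ and then $p$ large enough that $c_0+p(e-c)>2g(C_0)-2-d$ contradicts $\delta\leq 2g(C_0)-2-d$. This proves (2).

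I expect the main obstacle to be this last step: transferring the positivity of the teeth to the smoothed curve, \emph{uniformly} over all degree-$d$ twists $M$. This is exactly where ``$p\gg0$'' is forced, and it relies on the standard analysis of the behaviour of $f^{*}T_X$ under degeneration to a free comb (including the routine point that only finitely many degrees $\delta$ need be examined, the relevant $\delta$ being bounded below by the generic value of $\mu_{\min}(F_t^{*}T_X)$). The remaining ingredients — the comb-smoothing theorem, the existence of arbitrarily positive very free curves through a point, and the descent to $k$ via the comb structure — are classical, and I anticipate no real difficulty there.
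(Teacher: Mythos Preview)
Your argument is correct, but the paper takes a different and more direct route. Rather than dualising and specialising a line-bundle quotient through the relative Quot scheme, the paper first proves an elementary lemma: there is an integer $n=n(f_0,d)$ such that $H^1(C_0,f_0^{*}T_X\otimes L\otimes M)=0$ for every line bundle $L$ of degree $\geq n$ and every $M$ of degree $d$. It then invokes Koll\'ar's Lemma II.7.10.1: if $E$ is a vector bundle on a smoothing $\Sigma$ of a comb with $p$ teeth, with $E|_{C_i}$ ample on each tooth and $H^1(C_0,L\otimes E|_{C_0})=0$ for every $L$ of degree $\geq p$, then $H^1(\Sigma_t,E|_{\Sigma_t})=0$ for general $t$. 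Applying this with $E=F^{*}T_X\otimes N$, where $N$ is a line bundle on $\Sigma$ restricting to $M$ on the general fibre and on the handle and trivially on the teeth, and with $p\geq n$, gives (2) immediately. The mechanism behind Koll\'ar's lemma is just the twist by $\OO_\Sigma(-C_1-\cdots-C_p)\cong\OO_\Sigma(C_0)$, which has degree $p$ on the handle and $-1$ on each tooth, followed by ordinary upper semicontinuity of $h^1$.

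Compared with your approach, the paper never needs to boost the positivity of the teeth beyond ``very free'' (your parameter $e$ is absent), and it avoids all bookkeeping with rank-one sheaves on nodal curves. Your route is more geometric and makes the quantitative role of $p$ explicit, but at the cost of that bookkeeping; in particular you should justify that the Quot-scheme limit $\mathcal{Q}_0$ really has rank one on \emph{every} component of $C$ (this follows from upper semicontinuity of the fibre dimension of the flat kernel together with the Hilbert-polynomial constraint, but it is not automatic and is exactly what makes your degree estimate go through).
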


For the proof, we need the following lemma.
\begin{lem}\label{vanishing}
Let $D$ be a smooth projective connected curve of genus $g$ and $f: D \to X$ be a morphism to a smooth variety. Then given an integer $d$, there is a number $n$ such that for any line bundle $L$ of degree at least $n$, $H^1(D, f^*T_X\otimes L \otimes M)=0$ for any line bundle $M$ of degree $d$.
\end{lem}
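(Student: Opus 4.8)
The plan is to reduce the statement to the case of line bundles, where $H^1$-vanishing on a curve is governed purely by the degree. Write $E := f^{*}T_X$, a locally free sheaf of rank $r = \dim X$ on the smooth projective curve $D$. Since $D$ is a smooth curve, $E$ admits a filtration by subbundles
\[
0 = E_0 \subset E_1 \subset \cdots \subset E_r = E
\]
whose successive quotients $Q_i := E_i/E_{i-1}$ are line bundles: repeatedly extract a saturated sub-line-bundle (e.g. twist by an ample bundle to obtain a section, then saturate), so that each $E_i/E_{i-1}$ is torsion-free of rank $1$ on a smooth curve, hence a line bundle. Fix such a filtration once and for all, and set $m := \min_{1\le i \le r} \deg Q_i$, an integer depending only on $f$.

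Next I would record the elementary fact that for a line bundle $N$ on $D$ with $\deg N > 2g - 2$ one has $H^1(D, N) = 0$, since $H^1(D,N) \cong H^0(D, \omega_D \otimes N^{-1})^{\vee}$ by Serre duality and $\deg(\omega_D \otimes N^{-1}) < 0$. Consequently, if $N$ is any line bundle with $\deg N > 2g - 2 - m$, then $\deg(Q_i \otimes N) > 2g - 2$ for every $i$, so $H^1(D, Q_i \otimes N) = 0$. Tensoring the filtration by $N$ and running up the long exact cohomology sequences of
\[
0 \to E_{i-1}\otimes N \to E_i \otimes N \to Q_i \otimes N \to 0,
\]
an induction on $i$ (using $H^1(D, E_{i-1}\otimes N)=0$ and $H^1(D,Q_i\otimes N)=0$) yields $H^1(D, E \otimes N) = 0$.

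Finally, given the integer $d$, I would take $n$ to be any integer with $n + d > 2g - 2 - m$, for instance $n = \max\{0,\ 2g - 1 - m - d\}$. Then for any line bundle $L$ with $\deg L \ge n$ and any line bundle $M$ with $\deg M = d$, the line bundle $N := L \otimes M$ satisfies $\deg N = \deg L + d \ge n + d > 2g - 2 - m$, and therefore $H^1(D, f^{*}T_X \otimes L \otimes M) = H^1(D, E\otimes N) = 0$, which is exactly the claim.

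\textbf{On the main obstacle.} There is essentially no obstacle here: this is a soft Serre-type vanishing statement on a curve. The only point that deserves a moment's attention is that $n$ must work uniformly for \emph{all} line bundles $M$ of degree $d$, not just one fixed $M$; but this is automatic, because the vanishing criterion above depends on $N$ only through $\deg N = \deg L + d$. If one prefers to bypass the filtration, one can instead fix an ample line bundle $A$ on $D$, use that $E \otimes A^{\otimes k}$ is globally generated for $k \gg 0$ to present $E$ as a quotient $A^{\otimes -k,\oplus N}\twoheadrightarrow E$, and conclude from $H^1(D, A^{\otimes -k}\otimes N)=0$ together with $H^2=0$ on a curve; the filtration argument is marginally shorter and comes with the explicit bound displayed above.
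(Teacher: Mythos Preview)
Your argument is correct. It differs from the paper's proof: you filter $E=f^{*}T_X$ by line bundles and invoke the numerical Serre-duality criterion $\deg>2g-2$ on each graded piece, so the required threshold $n$ is read off from $m=\min_i\deg Q_i$. The paper instead never filters $E$: it fixes one twist $L_0$ with $H^1(D,E\otimes L_0)=0$, then uses twice the trick that a divisor of degree at least $g$ is effective to write $L_0+L_1+M$ and then $L+M$ as $L_0$ (resp.\ $L_0+L_1+M$) plus an effective divisor, so the cokernel in each short exact sequence is a skyscraper and $H^1$ propagates. Your route is slightly more elementary and yields an explicit bound in terms of the degrees of the $Q_i$; the paper's route avoids the filtration and works for the bundle $E$ as a black box, at the cost of a less explicit $n=\deg L_0+\deg L_1+g$. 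One small quibble: in your alternative paragraph you use $N$ both for the number of summands in $(A^{\otimes -k})^{\oplus N}$ and for the line bundle $L\otimes M$; the mathematics is fine, but the notation clashes.
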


\begin{proof}
There is a line bundle $L_0$ on $D$ such that $H^1(D, f^*T_X\otimes L_0)=0$. Choose a line bundle $L_1$ of degree at least $g-d$. Then $H^0(D, \OO_D(L_1+M))$ is non-zero for any line bundle $M$ of degree $d$ since any divisor of degree at least $g$ is rationally equivalent to an effective divisor. Take $E_M$ to be an effective divisor in the linear system $\vert L_1+M \vert $. Then we have the short exact sequence of sheaves
\[
0 \to f^*T_X\otimes \OO(L_0) \to f^*T_X \otimes \OO(L_0+E_M) \to f^*T_X \otimes \OO(L_0+L_1+M) \vert_{E_M} \to 0.
\]
Thus $H^1(D, f^*T_X \otimes \OO(L_0+L_1+M))=0$ for any line bundle $M$ of degree $d$. \

Take $n$ to be $\deg L_0+\deg L_1+g$. Then for any line bundle $L$ whose degree is at least $n$,
$H^0(D, \OO_D(L-L_0-L_1))$ is non-zero, again since any divisor of degree at least $g$ is rationally equivalent to an effective divisor. Take a divisor $E$ in the linear system $\vert L-L_0-L_1 \vert$. We have the following short exact sequence:
\[
0 \to f^*T_X\otimes \OO(L_0+L_1+M) \to f^*T_X \otimes \OO(E+L_0+L_1+M) \to f^*T_X \otimes \OO(L+M) \vert_E \to 0.
\]
Thus $H^1(D, f^*T_X  \otimes \OO(L+M))=0$.
\end{proof}

\begin{proof}[Proof of Proposition \ref{van}]
We can assemble a comb whose handle is $C_0$ and whose teeth are very free curves $C_i, 1\leq i \leq m$. By Theorem 7.9, Chap II, \cite{kollarbook}, at least $m-h^1(C_0, f^*T_X)$ teeth can be smoothed. We choose $m$ to be large enough so that $m-h^1(C_0, T_X\vert_C)\geq n$, where $n$ is the number in Lemma \ref{vanishing}. Then a general smoothing $\Sigma_t$ of the subcomb has $H^1(\Sigma_t, F_t^* T_X \otimes M)=0$ for any line bundle $M$ of degree $d$ by Lemma \ref{lem:van} (Take $E$ to be $F^*T_X \otimes N$, where $N$ is the line bundle on $\Sigma$ whose restriction to a general fiber and the handle $C_0$ is isomorphic to $M$ and the restriction to the teeth are trivial line bundles).
\end{proof}
\begin{lem}[\cite{kollarbook}, Chap. II, Lemma 7.10.1]\label{lem:van}
Let $C=C_0 \cup C_1 \cup C_2...\cup C_p$ be a comb with $p$ teeth as in Definition \ref{defcomb}. Let $q: \Sigma \to T$ be a smoothing of $C$. And let $E$ be a vector bundle over $\Sigma$ such that the restrictions $E|_{C_i}$ are all ample for $1 \leq i \leq p$ and $H^1(C_0, L \otimes E|_{C_0})=0$ for every line bundle $L$ on $C_0$ of degree at least $p$. Then $H^1(\Sigma_{t}, E|_{\Sigma_{t}})=0$ for a general $t$.
\end{lem}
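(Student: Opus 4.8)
The plan is to reduce the statement, via upper semicontinuity of $h^1$ in the flat family $q\colon\Sigma\to T$, to exhibiting a single locally free sheaf $\mathcal{G}$ on $\Sigma$ with $\mathcal{G}|_{\Sigma_t}\cong E|_{\Sigma_t}$ for general $t$ and $H^1(\Sigma_0,\mathcal{G}|_{\Sigma_0})=0$. Before doing that I would record two geometric facts about the total space. First, since $\Sigma_0=C$ is nodal at the attaching points $x_i=C_0\cap C_i$ and smooth along $C$ elsewhere, and $T$ is a smooth curve, $\Sigma$ is smooth along $C$; in particular each tooth $C_i$ is a Cartier divisor in $\Sigma$. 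Second, because $q$ contracts every component of $C$ to the point $0\in T$, the line bundle $\OO_\Sigma(\Sigma_0)$ is pulled back from $T$ and so restricts trivially to $C_0$ and to each $C_i$; expanding $\Sigma_0=C_0+\sum_{i=1}^{p}C_i$ and using $C_0\cdot C_i=1$ and $C_i\cap C_j=\emptyset$ for $i\neq j$, this gives $N_{C_i/\Sigma}\cong\OO_{\PP^1}(-1)$ and $\OO_\Sigma(C_i)|_{C_0}\cong\OO_{C_0}(x_i)$.

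With these in hand I would set $\mathcal{G}:=E\otimes\OO_\Sigma(C_1+\dots+C_p)$, which is locally free and hence flat over $T$. For $t\neq 0$ the fibre $\Sigma_t$ misses every tooth, so $\mathcal{G}|_{\Sigma_t}\cong E|_{\Sigma_t}$, and it remains to prove $H^1(C,\mathcal{G}|_C)=0$. The effect of the twist is visible on components: $\mathcal{G}|_{C_0}\cong E|_{C_0}\otimes\OO_{C_0}(x_1+\dots+x_p)$ is $E|_{C_0}$ twisted by a line bundle of degree exactly $p$, so $H^1(C_0,\mathcal{G}|_{C_0})=0$ by the hypothesis, while $\mathcal{G}|_{C_i}\cong E|_{C_i}\otimes\OO_{\PP^1}(-1)$ is globally generated with $H^1=0$, since $E|_{C_i}$ ample on $\PP^1$ means it is a direct sum of line bundles $\OO_{\PP^1}(a)$ with all $a\geq 1$.

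To pass from the components to $C$ I would use the normalization $\nu\colon\widetilde C=C_0\sqcup C_1\sqcup\dots\sqcup C_p\to C$ together with the exact sequence
\[
0\longrightarrow \mathcal{G}|_C\longrightarrow \nu_{*}\nu^{*}(\mathcal{G}|_C)\longrightarrow \bigoplus_{i=1}^{p}(\mathcal{G}|_C)_{x_i}\longrightarrow 0,
\]
whose last term collects the fibres of $\mathcal{G}$ at the nodes. In the associated long exact sequence $H^1(\widetilde C,\nu^{*}(\mathcal{G}|_C))=H^1(C_0,\mathcal{G}|_{C_0})\oplus\bigoplus_{i}H^1(C_i,\mathcal{G}|_{C_i})=0$ by the previous paragraph, so $H^1(C,\mathcal{G}|_C)$ is the cokernel of the evaluation map $H^0(\widetilde C,\nu^{*}(\mathcal{G}|_C))\to\bigoplus_{i}(\mathcal{G}|_C)_{x_i}$. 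Since each $\mathcal{G}|_{C_i}$ is globally generated and the teeth are disjoint, the sections vanishing on $C_0$ already surject onto the right-hand side, so this map is surjective and $H^1(C,\mathcal{G}|_C)=0$. Upper semicontinuity of $t\mapsto h^1(\Sigma_t,\mathcal{G}|_{\Sigma_t})$ then forces $H^1(\Sigma_t,E|_{\Sigma_t})=H^1(\Sigma_t,\mathcal{G}|_{\Sigma_t})=0$ for general $t$.

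The essential idea — and the one thing easy to get wrong — is the choice of $\mathcal{G}$: one must twist $E$ \emph{up} along the teeth rather than restrict $E$ directly, since restricting $E$ to $C$ only yields $H^1(C,E|_C)\cong H^1(C_0,E|_{C_0})$, over which the hypothesis says nothing. The identity that $\OO_\Sigma(\Sigma_0)$ restricts trivially to $C_i$ is exactly what converts the harmless $\OO(-1)$ twist on each tooth into the degree-$p$ twist on the handle required by the hypothesis. The remaining points — that $\Sigma$ may be taken smooth along $C$ so that $q$ is flat there and $\OO_\Sigma(\sum C_i)$ is a genuine line bundle, and that $q$ is proper so that the semicontinuity theorem applies — are routine and automatic for the smoothings produced above.
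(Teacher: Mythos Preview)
The paper does not give its own proof of this lemma: it is simply quoted from \cite{kollarbook}, Chap.~II, Lemma~7.10.1, and used as a black box in the proof of Proposition~\ref{van}. Your argument is correct and is in fact the standard one from Koll\'ar's book: twist $E$ by $\OO_\Sigma(C_1+\cdots+C_p)$, use the fibre relation $\OO_\Sigma(\Sigma_0)|_{C}\cong\OO_C$ to compute that this produces a degree-$p$ twist on the handle and an $\OO(-1)$ twist on each tooth, verify $H^1$ vanishes on the central fibre via the normalization sequence, and conclude by semicontinuity.

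One small point worth flagging: the paper's definition of a smoothing does not explicitly require $\Sigma$ to be a smooth surface, which you need in order to treat the $C_i$ as Cartier divisors and to run the normal-bundle computation. You acknowledge this at the end, and indeed the smoothings actually used (coming from Koll\'ar's Theorem~7.9) have smooth total space, so this is not a gap in the intended application; but strictly speaking it is an extra hypothesis beyond what the lemma as stated in the paper assumes.
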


\begin{rem}\label{nodal}
Let $f: D \to X$ be a curve in a separable rationally connected variety $X$. Assume $D$ is embedded. One can add very free rational curves to form a higher genus  nodal curve and smooth it, as is done in Section 2.2, \cite{ghs}. When $D$ is not embedded in $X$, one can first embed $D$ in $X \times \mathbb{P}^3$ and then do the above operation and project to $X$. In other word, after adding suitable rational curves, we can deform the reducible curve to a curve of higher genus in $X$.
\end{rem}

\subsection{Moduli space of stable maps to projective varieties}
\begin{defn}\label{defstable}
Let $C$ be a connected nodal curve, and $X$ be a projective variety. We call a morphism $f: C \to X$ a stable map if every irreducible component of $C$ which is mapped to a point is one of the followings:
\begin{itemize}
\item A curve of arithmetic genus at least $2$.
\item A curve of arithmetic genus $1$ having at least $1$ intersection point with other components of $C$.
\item A curve of arithmetic genus $0$ having at least $3$ intersection points with other components of $C$.
\end{itemize}

\end{defn}
Let $\beta$ be a curve class of $X$. We have a proper moduli space of stable maps $\overline{M}_{g, 0}(X, \beta)$ (\cite{FP}). When $X$ is a point, we recover the moduli space of stable curves  $\overline{M}_{g,0}$ as in \cite{deligne-mumford}.

\begin{rem}
When $X$ is not projective, we might not have a proper moduli stack of stable maps $\overline{M}_{g, 0}(X, \beta)$: given a family of stable maps over the generic point of a discrete valuation domain to a proper, non-projective variety, it is always possible to extend the family over the closed point (possibly after a base change) to a family of prestable maps (i.e. maps from a family of nodal curves). But it is not clear that one can extend the family over the closed point (even after a base change) to a family of stable maps.
\end{rem}

Let $X$ and $Y$ be projective varieties with a morphism $\pi:X \to Y$. Fix a curve class $\beta$ of $X$. We have then a natural
forgetful map (\cite{BM}):
$$
\Phi : \overline{M}_{g,0}(X,\beta) \to \overline{M}_{g,0}(Y,\pi_{*}\beta)
$$
defined by composing a map $f : C \to X$ with $\pi$ and collapsing
components of $C$ as necessary to make the composition $\pi \circ f$ stable.

Now we have an easy but important observation.
\begin{lem}\label{contraction}
For a stable map $f : C \to X$, the components that are contracted under $$
\Phi: \overline{M}_{g,0}(X,\beta) \to \overline{M}_{g,0}(Y,\pi_{*}\beta)
$$ are all rational.
\end{lem}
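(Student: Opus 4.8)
The plan is to unwind the construction of the forgetful map $\Phi$ and to track which components of the source curve disappear. Given $f\colon C\to X$, the stable map $\Phi(f)$ is obtained from the prestable map $\pi\circ f\colon C\to Y$ by the canonical stabilization, which successively blows down components of $C$ until the resulting map to $Y$ is stable in the sense of Definition \ref{defstable}; the goal is to show that every component blown down in this process is rational. A first observation is that a component $C_i\subseteq C$ can be contracted by $\Phi$ only when $(\pi\circ f)|_{C_i}$ is constant, equivalently when $f(C_i)$ lies in a single fibre of $\pi$, since the stabilization never touches a component on which the morphism to $Y$ is non-constant.

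Next, given such a contracted component $C_i$, I would distinguish two cases. If $f|_{C_i}$ is itself constant, then because $f$ is already a stable map the component $C_i$ satisfies the trichotomy of Definition \ref{defstable}; in particular, if $C_i$ had arithmetic genus $\geq 1$ it would remain a stable contracted component after composition with $\pi$, and hence would not be contracted by $\Phi$, a contradiction --- so $C_i$ is rational. If instead $f|_{C_i}$ is non-constant, then $f|_{C_i}$ is finite onto a curve contained in a fibre of $\pi$; but this is irrelevant to the conclusion, because in the stabilization a component gets contracted only after it has become an unstable pointed curve, and the only nodal curves that can be unstable in this sense are smooth rational curves meeting the remainder of $C$ in at most two points (a component of arithmetic genus $\geq 2$ is always stable, and a genus $1$ component carries at least the node joining it to the rest of the connected curve $C$). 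Either way $C_i\cong\PP^1$.

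The only point that needs a little care is that "being contracted" is not something one can read off $C$ in a single step: a rational component with three or more special points may become unstable only after some of its neighbours have already been contracted, so one has to run the stabilization iteratively. This does not affect the statement, however, since every elementary step removes precisely one rational curve; hence the whole collection of components contracted by $\Phi$ consists of rational curves. I expect this bookkeeping to be the only mildly delicate part of the argument, everything else being immediate from the definitions.
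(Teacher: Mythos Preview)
Your proposal is correct and follows essentially the same reasoning as the paper's one-line proof: stabilization of a prestable map only contracts components that fail the conditions of Definition \ref{defstable}, and these are necessarily smooth rational curves with at most two special points. Your case distinction on whether $f|_{C_i}$ itself is constant is unnecessary (only the constancy of $\pi\circ f|_{C_i}$ is relevant), but the core observation---made explicit in your final paragraph on the iterative nature of stabilization---is exactly the paper's.
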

\begin{proof}
By Definition \ref{defstable}, the only possible non-stable components that need to be contracted are smooth rational curve having at most $2$ intersection points with other components of $C$.
\end{proof}

\section{Reducing rational equivalence to algebraic equivalence}
The following proposition is essentially Proposition 3.13.3, Chap. IV, \cite{kollarbook}. The existence of the number $N$ follows from the argument there but is not explicitly stated. We include the proof here for completeness.
\begin{prop}\label{kollarlemma}
Let $X$ be a proper rationally chain connected variety over an algebraically closed field. Then there is a positive integer $N$ together with a family of effective $1$-cycles with rational components
\[
\begin{array}{ccccc}
 F & \stackrel{u}{\longrightarrow} &  X\\
 g\downarrow &&  \\
  B  &  &
\end{array}
\eqno{},
\]
where $u$ is the evaluation map, such that for any $1$-cycle $D$ in $X$, there are integers $m_i$'s and rational curves $F_i$'s in the fibers of $g: F \to B$, which satisfies the following rational equivalence relation: $$N \cdot D \sim_{r.e.}\sum_i m_i u_*(F_i).$$
\end{prop}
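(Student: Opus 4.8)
The plan is to reduce to an irreducible reduced curve and then degenerate a fixed multiple of it using a single chain-connecting family. For the reduction: if a positive integer $N$ and a family $g\colon F\to B$ as in the statement are produced so that $N[D_0]\sim_{r.e.}\sum_i m_i u_*(F_i)$ for every irreducible reduced curve $D_0\subset X$, with each $F_i$ a rational curve contained in a fibre of $g$, then by $\mathbb Z$-linearity of rational equivalence the same $N$ and family work for an arbitrary $1$-cycle $D=\sum_j a_jD_j$. So it suffices to treat irreducible $D$.

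To build the family I use that $X$ is rationally chain connected over the algebraically closed field $k$: there is a single family $g_0\colon U\to B_0$ of connected trees of rational curves with evaluation $u_0\colon U\to X$ such that $(u_0,u_0)\colon U\times_{B_0}U\to X\times X$ is dominant. After a finite surjective base change on $B_0$ (harmless, as it preserves dominance of the double evaluation) and after shrinking $B_0$, I may assume that every fibre is a tree of $\mathbb P^1$'s, all of whose components and nodes are defined over the base. Fix a general closed point $x_0\in X$ and let $W\subset U\times_{B_0}U$ be the locus of triples $(b,z,z')$ with $u_0(z)=x_0$; the second evaluation $(b,z,z')\mapsto u_0(z')$ maps $W$ dominantly onto $X$, and over $W$ there is the pulled-back family $\mathcal U=U\times_{B_0}W$ with two sections $s_\infty,s_0\colon W\to\mathcal U$ (recording $z$ and $z'$), with $u\circ s_\infty\equiv x_0$ and $u\circ s_0$ dominant. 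Taking $B\subset W$ an irreducible closed subvariety of dimension $\dim X$ with $u\circ s_0|_B$ dominant (hence generically finite) onto $X$, I set $g\colon F=\mathcal U|_B\to B$, $u\colon F\to X$, with the induced sections $\sigma_\infty,\sigma_0$. This $F\to B$, whose fibres are trees of rational curves, is the required family of effective $1$-cycles with rational components.

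The key step treats an irreducible curve $D$ with normalization $\nu\colon\widetilde D\to D$. First produce a smooth projective curve $B_D$ with a finite morphism $\beta_0\colon B_D\to B$ for which $u\circ\sigma_0\circ\beta_0=\nu\circ\beta$ with $\beta\colon B_D\to\widetilde D$ finite of some degree $N_D$ (take $B_D$ a smooth model of a component, dominating $\widetilde D$, of the fibre product $B\times_X\widetilde D$ formed via $u\circ\sigma_0$ and $\nu$). Pulling back the family gives $\pi\colon\mathcal F=F\times_BB_D\to B_D$, $w\colon\mathcal F\to X$, and sections $\sigma_0^D,\sigma_\infty^D\colon B_D\to\mathcal F$ with $w_*[\sigma_0^D(B_D)]=N_D[D]$ and $w_*[\sigma_\infty^D(B_D)]=0$. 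The point is that $[\sigma_0^D(B_D)]-[\sigma_\infty^D(B_D)]$ is rationally equivalent on $\mathcal F$ to a cycle supported on fibres of $\pi$: since $B_D$ is a smooth curve with generic point $\eta$, the localization sequence
\[
\bigoplus_{b\in B_D}CH_1(\mathcal F_b)\longrightarrow CH_1(\mathcal F)\longrightarrow CH_0(\mathcal F_\eta)\longrightarrow 0
\]
is exact, the two sections determine $k(B_D)$-rational points of the generic fibre $\mathcal F_\eta$, and $\mathcal F_\eta$ is a tree of $\mathbb P^1$'s over $k(B_D)$ with rational nodes, on which any two rational points are rationally equivalent. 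Pushing forward along the proper map $w$, each fibre class $[\mathcal F_b]$ maps to $u_*$ of the sum of the (rational) components of the corresponding fibre of $g$; hence $N_D[D]\sim_{r.e.}\sum_i m_iu_*(F_i)$ with the $F_i$ among the components of fibres of $g$.

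The remaining point, and the one I expect to be genuinely delicate, is to make $N$ independent of $D$: as constructed, $N_D=\deg(\beta\colon B_D\to\widetilde D)$ depends on how $D$ meets the locus where $u\circ\sigma_0\colon B\to X$ fails to be finite. Using a flattening stratification of $u\circ\sigma_0$, over each of finitely many locally closed strata $S_i$ this generically finite map becomes finite of some degree $N_i$ onto its image; carrying out the previous paragraph over the smallest stratum meeting $D$ densely, and using the flat pullback of the family over that stratum so that the multiplicities are accounted for exactly, lets one arrange $N_D\mid N_i$, whence $N:=\operatorname{lcm}_i(N_i)$ works for every $1$-cycle $D$ at once. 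This is precisely the bookkeeping that is implicit in the proof of Proposition IV.3.13.3 of \cite{kollarbook} but not stated there, and it is where care is needed.
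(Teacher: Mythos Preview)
Your overall architecture---pull back the chain-connecting family along the curve, get two sections of a chain of ruled surfaces, and use that two sections differ by fibre classes---is exactly what the paper does. The divergence, and the real issue, is in the last paragraph where you try to make $N$ uniform.

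The paper obtains a single $N$ from the outset: it takes the double evaluation $F\times_B F\to X\times X$ to be generically finite of degree $N$ (not merely dominant), so that after fixing a general $x_0$ the induced map $F_0\to X$ is generically finite of degree $N$. The crucial extra step you are missing is that the paper then \emph{moves the curve} so that it passes through a general point of $X$: any irreducible curve $C$ is rationally equivalent to $[C_1]-[C_2]$ with each $C_i$ meeting the open locus where $F_0\to X$ is finite of degree exactly $N$ (this is done via a projective model and residual curves in complete intersections of very ample divisors). Once $C$ meets that open set, the one-dimensional part of its preimage pushes forward to $N\cdot C$ on the nose, and no stratification is needed.

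Your proposed fix via a flattening stratification does not close the gap as written. You shrank $B_0$ so that all fibres are trees with rational nodes; after that, your $B$ need not be proper, so $u\circ\sigma_0\colon B\to X$ need not be surjective, and strata of $X$ lying outside its image have $N_i=0$. A curve $D$ contained in such a stratum has no admissible $B_D$ at all, and the lcm of the $N_i$ is meaningless there. Even if you compactify $B$ to restore properness, you then lose the ``tree with rational nodes over the base'' structure on the fibres, which you used for the $CH_0$ computation on the generic fibre. The paper's move-the-curve trick sidesteps all of this; I would replace your final paragraph with that argument.
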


\begin{proof}
 By the definition of rational chain connectedness, there is a family of connected effective $1$-cycles with rational components $g: F \to B$ in $X$ with the evaluation map $u: F \to X $, and such that the map $$m: F\times_{B} F \to X\times X$$ is generically finite of degree $N$ and surjective. 

Now pick a general point $x_{0}$ of $X$. Then $u_0: u^{-1}(x_0) \times_{B} F \to {x_0}\times X$ is generically finite of degree $N$ and surjective. Denote $ u^{-1}(x_0) \times_{B} F $ by $F_0$. And let $B_0=g(u^{-1}(x_0))$. Then we have a diagram:
$$
\begin{array}{ccccc}
 F_0  & \stackrel{u_0}{\longrightarrow} &  X\\
 g\downarrow &&  \\
  B_0  &  &
\end{array}
\eqno{}
$$
Fix an irreducible curve $C$ in $X$. Then the class $[C]$ is rationally equivalent to a class of the form $[C_1]-[C_2]$, where $C_1$ and $C_2$ are curves in $X$ containing a general point. To see this, first find a projective birational morphism $X' \to X$ which is an isomorphism near the generic point of $C$. Then there is a unique lifting of $C$ to $X'$. Thus it suffices to prove this when $X$ is projective. Then one can find an irreducible curve $C'$, which contains a general point, as a residual curve of $C$ such that $C\cup C'$ is a complete intersection of very ample divisors. Then $[C]$ is rationally equivalent to the difference of a general complete intersection and $C'$. So we may assume that $C$ contains a general point of $X$. Let $\hat{C}$ be the one dimensional component of the inverse image of $C$ under ${u_0}$. Then  ${u_{0}}_{*}[\hat{C}]\sim_{r.e.} N\cdot C$.

The curve $\hat{C}$ is a section of the chain  of ruled surfaces, intersecting each other at a section:
$$ F_0 \times_{B_0} \hat{C}  \to \hat{C}.$$
There is another section $C_0:u^{-1}(x_0) \times_{B_0} \hat{C}$, which is mapped to $x_0$ via the evaluation map $u_0$.

Since two sections of a ruled surface are rationally equivalent modulo rational curves in the fiber, we have $$ \hat{C} - C_0 \sim_{r.e.} \sum_i n_i F_i$$  where $F_i$'s are irreducible components of the fibers of $F\to B$. Push forward this relation to $X$. Since $u_0(C_0)={x_0}$, we get $$N\cdot C \sim_{r.e.} \sum_i m_i F_i.$$
\end{proof}

Let $CH_1(X)_{alg}$ be the subgroup of the Chow group of $1$-cycles on $X$ which are algebraically equivalent to $0$. Here is a classical lemma.
\begin{lem}[\cite{bloch-ogus}, Lemma 7.10 ]\label{divisible}
The group $CH_p(X)_{alg}$ is a divisible group. Namely, for any integer $N>0$ and any element $x$ in $CH_p(X)_{alg}$, there is another element $y$ in $CH_p(X)_{alg}$ such that $x=N \cdot y$.
\end{lem}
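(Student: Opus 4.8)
The plan is to reduce the statement to the classical fact that the group of rational points of an abelian variety over an algebraically closed field is divisible, applied to the Jacobians of smooth projective curves.

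First I would recall the structure of $CH_p(X)_{\mathrm{alg}}$. By the definition of algebraic equivalence, together with the standard reduction that a witness of algebraic equivalence to zero can always be taken over a smooth projective curve (one slices the parameter variety by general hyperplane sections down to a curve and then normalizes; see e.g.\ Fulton, \emph{Intersection Theory}, Ch.~10), the group $CH_p(X)_{\mathrm{alg}}$ is generated by classes of the form $W(t) - W(t')$, where $T$ is a smooth projective curve over $k$, $t, t' \in T(k)$, and $W$ is a $(p+1)$-cycle on $T \times X$ with $W(t)$ the cycle-theoretic fiber over $t$ (the Gysin pullback along $\{t\} \times X \hookrightarrow T \times X$). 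Each such $W$ determines a correspondence homomorphism $\alpha_W \colon CH_0(T) \to CH_p(X)$, $\xi \mapsto (\mathrm{pr}_X)_*\bigl(W \cdot \mathrm{pr}_T^{*}\xi\bigr)$, which is well defined on rational equivalence classes, and $W(t) - W(t') = \alpha_W([t] - [t'])$. Since $[t] - [t']$ has degree $0$ it lies in $\mathrm{Pic}^0(T) = J(T)(k)$; hence every $x \in CH_p(X)_{\mathrm{alg}}$ can be written as a finite sum $x = \sum_i \alpha_{W_i}(\xi_i)$ with $T_i$ smooth projective curves and $\xi_i \in J(T_i)(k)$.

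Next I would invoke the divisibility of abelian varieties: for an abelian variety $A$ over an algebraically closed field $k$ and any $N > 0$, multiplication by $N$ is a finite surjective morphism $[N] \colon A \to A$, hence surjective on $k$-points, so $A(k)$ is an $N$-divisible abstract group. Applying this to each $J(T_i)$, pick $\eta_i \in J(T_i)(k)$ with $N\eta_i = \xi_i$ and set $y := \sum_i \alpha_{W_i}(\eta_i)$. Each $\alpha_{W_i}(\eta_i)$ is the image of a degree-zero zero-cycle on a smooth projective curve under a curve-parametrized correspondence, hence lies in $CH_p(X)_{\mathrm{alg}}$, so $y \in CH_p(X)_{\mathrm{alg}}$; and since the $\alpha_{W_i}$ are homomorphisms, $N y = \sum_i \alpha_{W_i}(N\eta_i) = \sum_i \alpha_{W_i}(\xi_i) = x$. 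This produces the required $y$.

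The only real content sits in the first step: the reduction of algebraic equivalence to families over smooth projective curves, and the verification that a cycle on $T \times X$ induces an honest homomorphism on Chow groups compatible with rational equivalence. Both are classical but are where the work lies; once they are granted, the divisibility of $J(T_i)(k)$ finishes the argument immediately. It is worth noting that the algebraic closedness of the base field is essential and used precisely here --- to pass from surjectivity of $[N]$ as a morphism of varieties to surjectivity on points --- consistent with the hypotheses in force throughout this section.
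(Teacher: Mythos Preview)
Your proposal is correct and follows essentially the same route as the paper: reduce generators of $CH_p(X)_{\mathrm{alg}}$ to differences of fibers in a family parametrized by a smooth projective curve, then invoke the divisibility of $CH_0(C)_{\mathrm{alg}}\cong J(C)(k)$ for an abelian variety over an algebraically closed field. The paper's proof is a two-sentence sketch of exactly this argument; your version simply makes the correspondence homomorphism and the role of algebraic closedness explicit.
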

\begin{proof}
The subgroup $CH_p(X)_{alg}$ is generated by cycles of the form $\Sigma_p-\Sigma_q$, where $\Sigma \subset C \times X$ is a family of cycles over a smooth projective curve $C$. So the divisibility of $CH_p(X)_{alg}$ follows from the divisibility of $CH_0(C)_{alg}$, which is isomorphic to the Jacobian of the curve $C$.
\end{proof}

Denote by $\cdot N$ the map of multiplication by $N$ in $CH_1(X)_{alg}$. Then $$CH_1(X)_{alg} \stackrel{\cdot N}{\longrightarrow} CH_1(X)_{alg}$$ is surjective by the divisibility of $CH_1(X)_{alg}$. Together with Proposition \ref{kollarlemma}, one immediately has:
\begin{cor}\label{sur}
Notation as in Proposition \ref{kollarlemma}. Then $CH_1(X)_{alg}$ is generated by cycles with only rational components.
\end{cor}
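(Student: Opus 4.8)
The plan is to deduce the corollary as a purely formal consequence of Proposition \ref{kollarlemma} together with the surjectivity of multiplication by $N$ on $CH_1(X)_{alg}$ that was just recorded. Fix an arbitrary class $\alpha\in CH_1(X)_{alg}$. Since multiplication by $N$ is surjective on $CH_1(X)_{alg}$ — this is exactly the divisibility of Lemma \ref{divisible} — I would first choose $\beta\in CH_1(X)_{alg}$ with $\alpha=N\cdot\beta$, and then pick an honest $1$-cycle $D$ on $X$ representing the class $\beta$.

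The next step is to apply Proposition \ref{kollarlemma} to the cycle $D$. It produces integers $m_i$ and rational curves $F_i$ lying in fibers of $g\colon F\to B$ with $N\cdot D\sim_{r.e.}\sum_i m_i\,u_*(F_i)$. Passing to rational equivalence classes in $CH_1(X)$ then gives
\[
\alpha \;=\; N\cdot\beta \;=\; [\,N\cdot D\,] \;=\; \sum_i m_i\,[u_*(F_i)].
\]
Each $u_*(F_i)$ is the image of a rational curve, hence an all-rational cycle, so $\alpha$ is represented by a cycle all of whose components are rational. As $\alpha\in CH_1(X)_{alg}$ was arbitrary, this shows $CH_1(X)_{alg}$ is generated by — indeed consists precisely of classes representable by — cycles with only rational components.

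I do not expect any genuine obstacle here: all of the real work sits in Proposition \ref{kollarlemma} and Lemma \ref{divisible}, which are already established, and both the rational chain connectedness of $X$ and the divisibility of $CH_1(X)_{alg}$ are in force. The only point worth flagging is that the rational curves $u_*(F_i)$ need not individually be algebraically equivalent to zero, so ``generated by cycles with only rational components'' must be read as the statement that every element of the subgroup $CH_1(X)_{alg}$ admits a representative whose components are all rational curves. If one wants generators that are themselves algebraically trivial, one can subtract off a fixed member of the family $g$ from each $u_*(F_i)$ so that the differences move in $F\to B$, and observe that the residual multiple of the fixed curve is then automatically algebraically trivial because $\alpha$ is; this is the only place where the fibered structure $F\to B$ of Proposition \ref{kollarlemma} gets used.
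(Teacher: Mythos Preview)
Your argument is correct and is exactly the one the paper intends: the corollary is the formal combination of Proposition \ref{kollarlemma} with the surjectivity of $\cdot N$ on $CH_1(X)_{alg}$ coming from Lemma \ref{divisible}, and you have spelled this out precisely. One small remark on your final paragraph: the aside about producing generators that are themselves algebraically trivial is not needed for the corollary as stated and used (see Lemma \ref{alg}), and the claim that the $F_i$ ``move in $F\to B$'' is slightly imprecise since the $F_i$ are irreducible components of fibers rather than full fibers---but this does not affect the main argument.
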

This allows us to reduce Theorem \ref{thm:main} to a similar statement for cycles modulo algebraic equivalence.
\begin{lem}\label{alg}
In order to prove the main theorem \ref{thm:main}, it suffices to prove that rational curves generate the group of one cycles modulo algebraic equivalence.
\end{lem}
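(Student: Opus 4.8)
The plan is to deduce the lemma formally from Corollary \ref{sur}, which has already been established. First I would observe that a smooth proper separably rationally connected variety is in particular rationally chain connected, so that Proposition \ref{kollarlemma}, and therefore Corollary \ref{sur}, applies to $X$. Recall that the engine behind Corollary \ref{sur} is the divisibility of $CH_1(X)_{alg}$ (Lemma \ref{divisible}, due to Bloch--Ogus): the integer $N$ produced by Proposition \ref{kollarlemma} can be absorbed because multiplication by $N$ is surjective on $CH_1(X)_{alg}$. The upshot of that discussion is that the subgroup $CH_1(X)_{alg}$ of cycle classes algebraically equivalent to zero is generated by classes of rational curves.

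Granting this, the argument would run as follows. Let $D$ be an arbitrary $1$-cycle on $X$. Write $A_1(X)$ for the group of $1$-cycles modulo algebraic equivalence; there is a natural surjection $CH_1(X) \twoheadrightarrow A_1(X)$ whose kernel is exactly $CH_1(X)_{alg}$. By the hypothesis of the lemma, the image of $[D]$ in $A_1(X)$ is a $\mathbb{Z}$-linear combination $\sum_i m_i \overline{[R_i]}$ of classes of rational curves $R_i$. Hence $[D] - \sum_i m_i [R_i]$ lies in $CH_1(X)_{alg}$, which by the previous paragraph is itself generated by classes of rational curves. Combining the two, $[D]$ is a $\mathbb{Z}$-linear combination of classes of rational curves, which is precisely the conclusion of Theorem \ref{thm:main}.

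I do not expect a genuine obstacle at this step: once Corollary \ref{sur} is in hand the lemma is a one-line diagram chase, and all the real content has already been pushed into Proposition \ref{kollarlemma} and Lemma \ref{divisible}. The only thing to verify is that the hypotheses of those statements (rational chain connectedness) hold in our situation, which is immediate from separable rational connectedness. The substantive input that remains, namely that rational curves generate $A_1(X)$, is exactly what the later sections will supply, either via the product trick applied to $X \times \mathbb{P}^1$ or, following Koll\'ar's suggestion, via the surjectivity of the forgetful map $\Phi \colon \overline{M}_{g,0}(X,[C]) \to \overline{M}_{g,0}$ together with a degeneration argument.
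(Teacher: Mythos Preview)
Your argument is correct and matches the paper's proof essentially verbatim: assume the statement modulo algebraic equivalence, so that $[C]-\sum_i n_i[C_i]\in CH_1(X)_{alg}$, and then invoke Corollary \ref{sur} to write this difference as a $\mathbb{Z}$-linear combination of rational curves. The paper does not even bother to remark that separable rational connectedness implies rational chain connectedness, but your making this explicit is harmless.
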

\begin{proof}
Assume that for any curve $C$ in $X$, there are rational curves $C_i$'s and integers $n_i's$, such that $$[C]-\sum_i n_i [C_i]$$ is algebraically equivalent to $0$, namely, an element in $CH_1(X)_{alg}$. Then by Corollary \ref{sur}, we have rational equivalence relation $$[C]-\sum_i n_i [C_i]\sim_{r.e.} \sum_j m_j [F_j]$$ where $m_i$'s are integers and $F_j$'s are rational curves. So $C$ is rationally equivalent to an integral sum of rational curves.
\end{proof}

\section{Proof of the main theorem over $\mathbb{C}$ with a product trick}

We can use a product trick to prove the main result of this paper over the field of complex numbers $\mathbb{C}$, by applying the main argument of the celebrated paper ``Families of Rationally Connected Varieties'' of T. Graber, J. Harris and J. Starr (\cite{ghs}).

\begin{theorem}\label{maintheorem1}  Let $X$ be a smooth proper rationally connected variety over $\mathbb{C}$. Then every curve on $X$ is algebraically equivalent to a $\mathbb{Z}$-linear combination of rational curves.
\end{theorem}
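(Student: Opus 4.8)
The plan is to run the degeneration method of Graber--Harris--Starr \cite{ghs} on the constant rationally connected fibration $\pi\colon X\times\mathbb{P}^1\to\mathbb{P}^1$, whose fibres are copies of $X$ (note that $X\times\mathbb{P}^1$ is again smooth, proper and separably rationally connected over $\mathbb{C}$). Since algebraic equivalence is additive and replacing an irreducible curve by its normalization does not change its class in $CH_1(X)$, it suffices to fix a smooth projective curve $C$ of genus $g\ge 1$ (the genus-zero case being trivial) together with a morphism $\nu\colon C\to X$, and to prove that $\nu_*[C]$ is algebraically equivalent to a $\mathbb{Z}$-linear combination of rational curves. Pick a finite morphism $\phi\colon C\to\mathbb{P}^1$, necessarily of degree $e\ge 2$, and form $\psi=(\nu,\phi)\colon C\to X\times\mathbb{P}^1$. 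Then $\pi\circ\psi=\phi$ is finite of degree $e$ while $\mathrm{pr}_X\circ\psi=\nu$, so $\mathrm{pr}_{X*}\psi_*[C]=\nu_*[C]$; hence it is enough to show that $\psi_*[C]\in CH_1(X\times\mathbb{P}^1)$ is algebraically equivalent to a $\mathbb{Z}$-linear combination of curves that $\mathrm{pr}_X$ sends to rational curves or to points.

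Next I would apply the construction of \cite{ghs} to $\pi$, starting from $\psi$. By Proposition \ref{van} applied to $X\times\mathbb{P}^1$, after attaching to $\psi$ finitely many very free rational curves so as to form a comb and smoothing it by Lemma \ref{lem:van}, we may replace $\psi$ --- modulo algebraic equivalence in $X\times\mathbb{P}^1$ and modulo the attached rational teeth --- by a stable map $\psi^{+}\colon C^{+}\to X\times\mathbb{P}^1$ that is still finite over $\mathbb{P}^1$ and is sufficiently positive, meaning $H^1\bigl(C^{+},(\psi^{+})^{*}T_{X\times\mathbb{P}^1}\otimes M\bigr)=0$ for every line bundle $M$ of bounded degree. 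The heart of the argument is then the \cite{ghs} degeneration: for such a positive multisection one attaches in a general fibre a rational curve through suitably many of the points of $C^{+}$ in that fibre and takes a suitable deformation of the resulting reducible stable map, the effect being to express $\psi^{+}_*[C^{+}]$, modulo rational curves contained in fibres of $\pi$, as $[S]+(\psi_1^{+})_*[C_1^{+}]$ with $S$ a section of $\pi$ and $\psi_1^{+}$ again a sufficiently positive map finite over $\mathbb{P}^1$ but strictly simpler (of smaller degree over $\mathbb{P}^1$, or of smaller geometric genus of its source). Since such a complexity cannot decrease indefinitely, iterating this exhibits $\psi_*[C]$ as algebraically equivalent in $X\times\mathbb{P}^1$ to a $\mathbb{Z}$-linear combination of curves each of which is a section of $\pi$, a rational curve contained in a fibre of $\pi$, or a curve contracted by $\pi$.

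Applying $\mathrm{pr}_{X*}$ finishes the proof: a section of $\pi$ is isomorphic to $\mathbb{P}^1$ and maps to a rational curve of $X$ or to a point, a fibre-rational curve likewise maps to a rational curve of $X$ or to a point, and a $\pi$-contracted curve maps to a point. Hence $\nu_*[C]=\mathrm{pr}_{X*}\psi_*[C]$ is a $\mathbb{Z}$-linear combination of classes of rational curves in $X$, as desired. (Combined with Lemma \ref{alg}, this also recovers Theorem \ref{thm:main} over $\mathbb{C}$.)

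The step I expect to be the \emph{main obstacle} is the degeneration in the second paragraph: verifying that a sufficiently positive multisection of $\pi$ genuinely degenerates, inside $X\times\mathbb{P}^1$, to a section together with a strictly simpler multisection modulo fibre-rational curves, and that the simpler multisection can itself be made sufficiently positive so that the induction proceeds. Everything here rests on two consequences of the rational connectedness of the fibres $X$: the existence of (very) free rational curves through prescribed general points of $X$, and the $H^1$-vanishing statements of Proposition \ref{van} and Lemma \ref{lem:van}, which make the relevant smoothings unobstructed. These are precisely the ingredients of \cite{ghs}, here transplanted to the fibration $\pi\colon X\times\mathbb{P}^1\to\mathbb{P}^1$.
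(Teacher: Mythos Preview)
Your overall strategy---working on the constant fibration $\pi\colon X\times\PP^1\to\PP^1$ and invoking the machinery of \cite{ghs}---is exactly the paper's product trick. The setup in your first paragraph (lift $C$ to a multisection, add very free rational teeth, smooth to a positive multisection) matches the paper. The divergence is in how you carry out the degeneration.

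You propose an iterative scheme: attach a vertical rational curve to $C^{+}$, deform, and thereby ``peel off'' one section $S$, leaving a multisection of strictly smaller degree over $\PP^1$; then repeat. You yourself flag this as the main obstacle, and rightly so: this is not what \cite{ghs} actually provides, and the inductive step---showing that the residual multisection can again be made sufficiently positive so the process continues---is genuinely delicate and not sketched. The mechanism you describe (attaching a fibre curve and deforming) is how \cite{ghs} produces \emph{flexibility}, not how one splits off a section.

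The paper avoids the iteration entirely. Once the curve $\hat{C}$ is \emph{flexible}, meaning the forgetful map
\[
\varphi\colon \overline{M}_{g,0}(X\times\PP^1,\beta)\longrightarrow \overline{M}_{g,0}(\PP^1,d)
\]
is dominant (hence, being proper, surjective) on the component of $[\hat{C}]$, one simply specializes in the target $\overline{M}_{g,0}(\PP^1,d)$ to a stable map whose domain has only rational components---an elementary degeneration. Any preimage in $\overline{M}_{g,0}(X\times\PP^1,\beta)$ then also has only rational domain components by Lemma~\ref{contraction}, since the contraction map collapses only rational curves. Pushing forward to $X$ finishes in one stroke. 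So the key object you are missing is not the section-peeling induction but the single surjectivity statement for $\varphi$ (Proposition~\ref{degeneration} and Theorem~\ref{main}), which does all the work at once.
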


\begin{rem}\label{red}
It is easy to reduce to the projective case as follows: by Chow's lemma and resolution of singularities, there is a smooth projective variety $X'$, with proper birational morphism $f: X' \to X$. One can smooth any curve $C\subset X$ (after adding free rational curves) to a general curve $C'$ not supported in the exceptional locus of $f$. Then just take a lift of $C'$ in $X'$.
\end{rem}

Assuming $X$ is projective, the idea of proof is to first lift any irreducible curve $C$ in $X$ to $X \times \mathbb{P}^1$. By \cite{ghs}, there are very free rational curves which are horizontal with respect to the projection to $\mathbb{P}^1$, just add enough of these curves to form a comb and then smooth to a curve $\hat{C}$ which is $flexible$ in the sense of \cite{ghs}, i.e. the natural map $$\overline{M}_{g,0}( X\times \mathbb{P}^1, \beta) \to \overline{M}_{g,0}(\mathbb{P}^1, d)$$  is proper and surjective at the corresponding component. Degenerating $\hat{C}$ to a sum of rational curves and pushing forward this relation to $X$, we get the desired result.

The following subsection is largely a restatement of the unpublished note \cite{zong}.

\subsection{Families of rationally connected varieties}

Let $Y$ be a smooth projective variety with a morphism $Y \to \mathbb{P}^1$ whose general fibers are rationally connected.

For a class $\beta \in H_2(Y,\mathbb{Z})$ having intersection
number $d$ with a fiber of the map $\pi$.  We have then a natural
morphism:
$$
\varphi : \overline{M}_{g,0}(Y,\beta) \to \overline{M}_{g,0}(\mathbb{P}^1,d).
$$
\begin{defn}
Let $f : C
\to Y$ be a stable map from a nodal curve $C$ of genus $g$ to $X$ with
class $f_*[C] = \beta$. We say that $f$ is {\it flexible} relative to
$\pi$ if the map $\varphi : \overline{M}_{g,0}(Y,\beta) \to \overline{M}_{g,0}(\mathbb{P}^1,d)$ is dominant at
the point $[f] \in \overline{M}_{g,0}(Y,\beta)$ and $\pi: C \to \mathbb{P}^1$ is flat.
\end{defn}

\begin{prop}\label{degeneration}
A  {\it flexible} curve $f : C \to Y$ can be degenerated to an effective sum of rational curves in $Y$.
\end{prop}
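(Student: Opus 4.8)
The plan is to exploit the dominance of $\varphi$ to transport a maximally degenerate stable map in the target $\overline{M}_{g,0}(\PP^1,d)$ back to $Y$, and then to read off from Lemma \ref{contraction} that the resulting limit curve in $Y$ has only rational components. The degeneration of cycles is then supplied directly by the lifted one-parameter family.

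First I would record the structural consequence of flexibility. Since $Y$ is projective, $\overline{M}_{g,0}(Y,\beta)$ is proper, hence $\varphi$ is a proper morphism. Let $M_0$ be the irreducible component of $\overline{M}_{g,0}(Y,\beta)$ containing $[f]$. Flexibility says that $\varphi|_{M_0}$ is dominant onto a component $W\subseteq \overline{M}_{g,0}(\PP^1,d)$; properness then upgrades this to $\varphi(M_0)=W$, so every stable map parametrized by $W$ arises, up to stabilization, as $\pi\circ\tilde f$ for some stable map $\tilde f$ to $Y$. Note also that the flatness of $\pi:C\to\PP^1$ built into the definition of flexibility means the domain of $f$ has no component contracted into a fibre, so $\varphi([f])=[\pi\circ f]$ with no collapsing.

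Next I would choose the target degeneration. The goal is a point $[h_0]\in W$ represented by a stable map $h_0:D_0\to\PP^1$ of genus $g$ and degree $d$ whose domain $D_0$ has only rational irreducible components, the arithmetic genus $g$ being produced entirely by loops in the dual graph (a necklace of rational curves together with rational bridges and tails), with no contracted component of positive genus. One builds such an $h_0$ by gluing rational maps to $\PP^1$ along nodes; since $\PP^1$ is convex, each piece is unobstructed and the configuration smooths to a map from a smooth genus-$g$ curve, so $[h_0]$ lies in the main component, and for the flexible curves produced by smoothing combs (Proposition \ref{van}, Remark \ref{nodal}) the point $[f]$ lies in that same component, whence $[h_0]\in W$. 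I would then choose an irreducible pointed curve $(T,0)$ and a morphism $\gamma:T\to W$ with $\gamma(0)=[h_0]$ and $\gamma(\eta)=\varphi([f])$ a general point. Because $\varphi:M_0\to W$ is proper and surjective, after a base change $T'\to T$ the arc lifts to $\tilde\gamma:T'\to M_0$ with $\tilde\gamma(\eta')=[f]$, giving a family $F:\Sigma\to Y$ over $T'$ whose general fibre is $f:C\to Y$ and whose special fibre is a stable map $\tilde f_0:\tilde D_0\to Y$ with $\varphi([\tilde f_0])=[h_0]$. It remains to see that every component of $\tilde D_0$ is rational: the stabilization $\mathrm{st}:\tilde D_0\to D_0$ of $\pi\circ\tilde f_0$ is an isomorphism on the components it does not contract and collapses the rest, so the non-contracted components map isomorphically onto components of $D_0$ and are rational by the choice of $D_0$, while the contracted ones are rational by Lemma \ref{contraction}. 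Hence $\tilde f_0$ carries each irreducible component of $\tilde D_0$ either to a point or onto a rational curve, so $(\tilde f_0)_*[\tilde D_0]=F_*[\Sigma_0]$ is an effective sum of rational curves, and the family $F$ exhibits $f:C\to Y$ as a degeneration of exactly this cycle.

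The main obstacle is the middle step: producing $[h_0]$ that simultaneously lies in the image component $W$ and has a domain with no positive-genus (in particular no contracted elliptic-or-higher) component, for if $D_0$ carried genus on a contracted component its lift would contain a positive-genus curve inside a fibre of $\pi$ rather than a rational one, and one could no longer conclude directly. Controlling the position of $[h_0]$ inside $W$—equivalently, showing that $W$ meets the all-rational boundary stratum—is precisely what forces us to know that $[f]$ sits in the main, smoothing component, and this is exactly where the convexity of $\PP^1$ and the comb-smoothing construction of the flexible curves enter.
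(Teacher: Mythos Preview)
Your overall strategy matches the paper's: use properness plus the dominance built into flexibility to make $\varphi$ surjective from the component $M_0$ onto a component $W$ of $\overline{M}_{g,0}(\PP^1,d)$, choose a point $[h_0]\in W$ with all-rational domain, lift a one-parameter family through it back to $Y$, and apply Lemma~\ref{contraction}.

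The difference, and the gap, is exactly at what you yourself flag as ``the main obstacle'': showing that $W$ contains a stable map whose domain is a union of rational curves. Your workaround (convexity of $\PP^1$ plus restricting to flexible curves built from combs) does not close this. Convexity tells you that your $h_0$ smooths to a map from a smooth genus-$g$ curve, but it does not tell you that the locus of such maps is irreducible, so you cannot conclude that the smoothing of $h_0$ and the given flat map $\pi\circ f$ lie in the \emph{same} component. Restricting attention to comb-built flexible curves both weakens the statement (the proposition is about arbitrary flexible $f$) and still does not supply an argument that $[\pi\circ f]$ and $[h_0]$ share a component; the references to Proposition~\ref{van} and Remark~\ref{nodal} concern curves in $X$, not the component structure of $\overline{M}_{g,0}(\PP^1,d)$.

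The paper disposes of this in one stroke by citing a classical theorem of Fulton \cite{F}: $\overline{M}_{g,0}(\PP^1,d)$ has a \emph{unique} irreducible component whose general member is a flat map $C\to\PP^1$. Since the definition of flexibility includes flatness of $\pi:C\to\PP^1$, the image component $W=\varphi(M_0)$ is forced to be this unique Hurwitz component, and it is then elementary that this component contains stable maps with all-rational domain. With Fulton's irreducibility in hand, your lifting argument and the appeal to Lemma~\ref{contraction} go through exactly as you wrote them.
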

\begin{proof}
It is a classical fact that the variety $\overline{M}_{g,0}(\mathbb{P}^1,d)$ has a unique
irreducible component whose general member corresponds to a flat map
$f : C \to \mathbb{P}^1$, see \cite{F}. Since the map
$\varphi : \overline{M}_{g,0}(Y,\beta) \to \overline{M}_{g,0}(\mathbb{P}^1,d)$ is proper and dominant, $\varphi$ is
surjective on the component of $\pi : C \to Y$.  By Lemma \ref{contraction} it is enough to find a degeneration of $C \to \mathbb{P}^1$ in $\overline{M}_{g,0}(\mathbb{P}^1,d)$ as a sum of rational curves, which is elementary.
\end{proof}

\begin{theorem}\label{main}
(Main Construction of \cite{ghs})For any multisection $B \to \mathbb{P}^1$, there are rational curves $C_i$'s such that $B\cup C_1 \cup...\cup C_m$ can be deformed to a flexible curve of $Y \to \mathbb{P}^1$.
\end{theorem}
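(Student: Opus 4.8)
The plan is to produce the rational curves by an application of the comb-smoothing machinery of Proposition \ref{van}, adapted to the relative setting over $\mathbb{P}^1$. Start with the given multisection $B \to \mathbb{P}^1$, say of degree $d$; after normalizing $B$ and replacing the original map by the induced map from the normalization (this only changes $B$ by a partial normalization, hence changes the curve class by rational curves, which we can absorb), we may assume $B$ is a smooth projective curve mapping finitely and flatly to $\mathbb{P}^1$. The handle will be $B$, and the teeth will be very free rational curves in the \emph{fibers} of $\pi : Y \to \mathbb{P}^1$. Since the general fiber $Y_t$ is rationally connected (and we are over $\mathbb{C}$, so rational connectedness gives very free curves), through a general point of a general fiber there is a very free rational curve $C_i \subset Y_t \subset Y$; attaching $C_i$ to $B$ at a general point of $B$ lying over such a $t$ produces a comb $B \cup C_1 \cup \dots \cup C_m$ with a map to $Y$. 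Note each tooth is contracted by $\pi$, so the composite with $\pi$ is still $B \to \mathbb{P}^1$ up to the (stable) collapse of the teeth.

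Next I would smooth this comb inside $Y$. By Theorem 7.9, Chap. II of \cite{kollarbook} (the same input used in Proposition \ref{van}), once $m$ is large enough — precisely $m - h^1(B, f^*T_Y) \geq $ the bound needed below — a general subcomb smooths to a curve $F_t : \Sigma_t \to Y$ of genus $g = g(B) + (\text{number of teeth smoothed})$, and moreover one arranges, via Lemma \ref{lem:van} applied to $E = F^*T_Y$ (the teeth being very free, hence $F^*T_Y|_{C_i}$ ample), that $H^1(\Sigma_t, F_t^* T_Y)=0$. This vanishing is the key point: it shows the moduli space $\overline{M}_{g,0}(Y,\beta)$ is smooth of the expected dimension at $[F_t]$. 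Because each tooth is contained in a fiber, the induced map $\Sigma_t \to \mathbb{P}^1$ is still finite of degree $d$, and after a further deformation (moving the attaching points, which is unobstructed by the same $H^1$ vanishing) one checks $\Sigma_t \to \mathbb{P}^1$ is flat — in fact one can take $\Sigma_t$ to map with simple ramification.

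The remaining task is to verify that the smoothed curve is \emph{flexible} relative to $\pi$, i.e. that $\varphi : \overline{M}_{g,0}(Y,\beta) \to \overline{M}_{g,0}(\mathbb{P}^1,d)$ is dominant at $[F_t]$. By the $H^1$ vanishing, $\overline{M}_{g,0}(Y,\beta)$ is smooth at $[F_t]$ and the differential of $\varphi$ there can be identified with the map on first-order deformations $H^0(\Sigma_t, F_t^*T_Y) \to H^0(\Sigma_t, (\pi \circ F_t)^* T_{\mathbb{P}^1})$ induced by $d\pi : F_t^*T_Y \to (\pi\circ F_t)^* T_{\mathbb{P}^1}$. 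Surjectivity of this map on $H^0$ follows once $H^1(\Sigma_t, \ker(d\pi)) = H^1(\Sigma_t, F_t^* T_{Y/\mathbb{P}^1}) = 0$; and this vanishing again holds after attaching enough very free teeth (the teeth lie in fibers, so $F^*T_{Y/\mathbb{P}^1}|_{C_i}$ is ample because $C_i$ is very free in its fiber), by the identical argument via Lemma \ref{lem:van} with $E = F^*T_{Y/\mathbb{P}^1}$. Thus $\varphi$ is dominant at $[F_t]$, so $F_t$ is flexible, completing the proof. The main obstacle is the last step: arranging simultaneously that $F^*T_Y$ \emph{and} the relative tangent bundle $F^*T_{Y/\mathbb{P}^1}$ have vanishing $H^1$ after smoothing, and correctly identifying $d\varphi$ with the $d\pi$-induced map on sections so that this relative vanishing really does give dominance of $\varphi$ — everything else is a routine comb-smoothing and deformation-theory bookkeeping already packaged in Proposition \ref{van} and Lemma \ref{lem:van}.
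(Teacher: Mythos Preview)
The paper does not prove this statement at all: Theorem \ref{main} is quoted verbatim as the ``Main Construction of \cite{ghs}'' and used as a black box in the proof of Theorem \ref{maintheorem1}. So there is no proof in the paper to compare your proposal against; you are in effect sketching the Graber--Harris--Starr argument itself.

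Your outline is the right one and matches what \cite{ghs} actually does: take the multisection $B$ as handle, attach as teeth very free rational curves lying in smooth \emph{fibers} of $\pi$, smooth the comb, and deduce flexibility from the vanishing of $H^1$ of the pulled-back \emph{relative} tangent bundle $T_{Y/\PP^1}$. The point you flag at the end --- that the relevant vanishing is $H^1(\Sigma_t,F_t^*T_{Y/\PP^1})=0$, and that this is exactly what makes $\varphi$ smooth (hence dominant) at $[F_t]$ via the relative obstruction theory of stable maps --- is indeed the heart of the matter, and for this bundle the teeth really do have ample restriction, since a very free curve in a fiber $Y_t$ has ample $T_{Y_t}=T_{Y/\PP^1}|_{Y_t}$.

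Two corrections. First, your genus formula is wrong: smoothing a comb does \emph{not} change the genus; the arithmetic genus of $B\cup C_1\cup\ldots\cup C_m$ equals $g(B)$, so the smoothed curve $\Sigma_t$ still has genus $g(B)$ and still has degree $d$ over $\PP^1$. This is harmless for the argument but should be fixed. Second, your proposed application of Lemma \ref{lem:van} with $E=F^*T_Y$ does not work as stated, because for a tooth $C_i$ contained in a fiber the restriction $F^*T_Y|_{C_i}$ is only nef (it has a trivial quotient coming from $\pi^*T_{\PP^1}$), not ample. Fortunately you do not need this: as you yourself observe, the vanishing that actually controls dominance of $\varphi$ is $H^1(F_t^*T_{Y/\PP^1})=0$, and once that holds the long exact sequence coming from $0\to T_{Y/\PP^1}\to T_Y\to\pi^*T_{\PP^1}\to 0$ shows that the relative obstruction to $\varphi$ vanishes, which is all that is required.
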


Now we can prove Theorem \ref{maintheorem1}.

\begin{proof}[Proof of Theorem \ref{maintheorem1}]
Take $Y=X\times \mathbb{P}^1$, for any irreducible curve $C\subset X$, lift it to a curve $C'$ in $X\times{0} \subset Y$. Since $Y$ is rationally connected, we can add enough free curves of $Y$ which are horizontal with respect to the projection $Y\to \mathbb{P}^1$, such that the comb can be deformed to a multisection $\Gamma$ of the fibration $Y\to \mathbb{P}^1$. By Theorem \ref{main}, we can add some other rational curves to $\Gamma$ and deform the reducible curve to a flexible curve. Then by Proposition \ref{degeneration}, it can be degenerated to a sum of rational curves, so $C'$ is algebraically equivalent to an integral sum of rational curves in $Y$. Pushing forward the relation to $X$ finishes the proof.
\end{proof}

\begin{rem}
It might be possible to prove the main theorem in all characteristics by applying the argument of \cite{starr-jong} to $X\times \mathbb{P}^1$.
\end{rem}

\section{Proof of the main theorem in all characteristics }
We begin with a lemma.
\begin{lem}\label{surjectivity}
Let $f: C \to X$ be a morphism from a smooth projective connected curve $C$ of genus $g (\geq 2)$ to a projective variety $X$. Assume that the image of $f$ lies in the smooth locus $X^\text{sm}$ of $X$ and $H^1(C, f^*T_X)=0$. Then the moduli space $\overline{M}_{g, 0}(X, [C])$ is smooth at the point represented by $(f: C \to X)$ and the natural forgetful map:
\[
\Phi: \overline{M}_{g, 0}(X, [C]) \to \overline{M}_{g,0},
\]
restricted to the (unique) irreducible component containing the point represented by $(f: C \to X)$, is surjective.
\end{lem}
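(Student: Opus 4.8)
The plan is to run the deformation theory of the Kontsevich space $\overline{M}_{g,0}(X,[C])$ at the point $[f]$, deduce from it that the forgetful map $\Phi$ is smooth (in particular dominant) at $[f]$, and then upgrade ``dominant'' to ``surjective'' using properness of $\Phi$ together with the irreducibility of $\overline{M}_{g,0}$.

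First, smoothness at $[f]$. Since $f(C)\subset X^{\text{sm}}$, the pullback $f^*T_X$ (taken from the smooth locus) is a vector bundle on $C$, so the standard deformation--obstruction theory of stable maps to a projective variety (\cite{FP}, \cite{BM}) applies at $[f]$: first-order deformations of the pair $(C,f)$ fit into an exact sequence
\[
0 \longrightarrow H^0(C,f^*T_X) \longrightarrow T_{[f]}\overline{M}_{g,0}(X,[C]) \longrightarrow H^1(C,T_C) \stackrel{\delta}{\longrightarrow} H^1(C,f^*T_X),
\]
where $H^1(C,T_C)$ is the full tangent space of $\overline{M}_{g,0}$ at the smooth curve $[C]$ (here $g\ge2$ is used, so that $H^0(C,T_C)=0$ and $C$ is automatically stable), and the obstruction space of $\overline{M}_{g,0}(X,[C])$ at $[f]$ is $H^1(C,f^*T_X)$ since $H^2$ of a curve vanishes. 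By hypothesis $H^1(C,f^*T_X)=0$, so $\overline{M}_{g,0}(X,[C])$ is unobstructed, hence smooth, at $[f]$; in particular $[f]$ lies on a unique irreducible component $\mathcal M$.

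Second, dominance of $\Phi|_{\mathcal M}$. Near $[f]$ the source curve is smooth of genus $\ge2$, hence already stable, so $\Phi$ contracts no component and is simply ``remember the source curve''; its differential at $[f]$ is the map $T_{[f]}\overline{M}_{g,0}(X,[C])\to H^1(C,T_C)=T_{[C]}\overline{M}_{g,0}$ in the sequence above. As $\delta=0$, this differential is surjective, so $\Phi|_{\mathcal M}$ is dominant. (Equivalently: $\overline{M}_{g,0}(X,[C])$ is smooth over the Artin stack $\mathfrak M_g$ of prestable curves at $[f]$, its relative obstruction $H^1(C,f^*T_X)$ being zero, and $\mathfrak M_g$ agrees with $\overline{M}_{g,0}$ near the smooth point $[C]$.)

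Finally, surjectivity. Since $X$ is projective, $\overline{M}_{g,0}(X,[C])$ is proper, hence so is $\Phi$ and $\Phi(\mathcal M)$ is closed in $\overline{M}_{g,0}$; being also dense, it must, by irreducibility of $\overline{M}_{g,0}$ (\cite{deligne-mumford}), be all of $\overline{M}_{g,0}$. The step I expect to be the real content is the deformation-theoretic one: one has to check that, even though $X$ may be singular, the hypothesis $f(C)\subset X^{\text{sm}}$ legitimately makes $f^*T_X$ a vector bundle and $Rp_*f^*T_X$ a valid (relative) obstruction theory on the universal curve; once that is granted, the single vanishing $H^1(C,f^*T_X)=0$ yields both the smoothness of the moduli space and the surjectivity of $d\Phi$. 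The remaining ingredients---properness of the space of stable maps to a projective target and irreducibility of $\overline{M}_{g,0}$---are classical.
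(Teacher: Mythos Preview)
Your proof is correct and follows essentially the same route as the paper: deformation theory of stable maps gives the long exact sequence relating the tangent space of $\overline{M}_{g,0}(X,[C])$ at $[f]$ to $H^1(C,T_C)$ and $H^1(C,f^*T_X)$, the hypothesis $H^1(C,f^*T_X)=0$ simultaneously kills the obstructions and makes $d\Phi$ surjective, and then properness plus irreducibility of $\overline{M}_{g,0}$ upgrades dominance to surjectivity. The paper phrases the tangent/obstruction spaces via the hypercohomology of the two-term complex $f^*\Omega_X\to\Omega_C$, but the content is identical to your sequence.
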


\begin{proof}
The deformation and obstruction space of the stable map $(f: C \to X)$ is
\[
Def(f)=\mathbb{H}^1(C,\mathbb{R}Hom_{\OO_C}(\Omega_f^{\cdot},\OO_C))
\]
 \[
Obs(f)=\mathbb{H}^2(C,\mathbb{R}Hom_{\OO_C}(\Omega_f^{\cdot},\OO_C))
\]
where
$\Omega^{\cdot}_f$ is the complex
\[
\begin{CD}
-1 & & 0 \\
f^*\Omega_X @> df^\dagger >> \Omega_C.
\end{CD}
\]
We have the long exact sequence
\begin{align*}
0 &\to H^0(C,T_{C}) \to H^0(C, f^*T_{X}) \to  \bH^1({{\RR}Hom_{\OO_C}(\Omega^{\cdot}_f,\OO_C)})\\
 &\to H^1(C,T_C) \to H^1(C, f^*T_{X}) \to \bH^2({{\RR}Hom_{\OO_C}(\Omega^{\cdot}_f, \OO_C)})\to 0
\end{align*}
Note that $\bH^1({{\RR}Hom_{\OO_C}(\Omega_f^{.},\OO_C)}) \to H^1(C, T_C)$ is the map between tangent spaces of $\overline{M}_{g, 0}(X, [C])$ and $\overline{M}_{g,0}$. Thus the vanishing of $H^1(C, T_X |_{C})$ implies both the surjectivity of the tangent space map and smoothness of $\overline{M}_{g,0}(X, [C])$ at the point represented by $(f: C \to X)$. Therefore the restriction of the forgetful morphism $\Phi$ to the component containing the point $(f: C \to X)$ is dominant. Since the morphism $\Phi$ is also proper when restricted to this component, it is also surjective.
\end{proof}

Now we can finish the proof of the main theorem by the following result and Lemma \ref{alg}.
\begin{theorem} Let $X$ be a smooth proper and separably rationally connected variety over an algebraically closed field of arbitrary characteristic.
Then every curve on $X$ is algebraically equivalent to a $\mathbb{Z}$-linear combination of rational curves.
\end{theorem}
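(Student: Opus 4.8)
The plan is to reduce the statement to the surjectivity of a forgetful map of moduli spaces and then degenerate a high‑genus stable map to a maximally singular stable curve. By additivity of algebraic equivalence it suffices to treat a single irreducible curve $C\subset X$, and, exactly as in Remark \ref{red}, we may assume $X$ is projective. Let $\nu\colon\tilde C\to X$ be the composition of the normalization $\tilde C\to C$ with the inclusion, so that $[C]=\nu_*[\tilde C]$ in $CH_1(X)$. Choosing a general embedding $j\colon\tilde C\hookrightarrow\PP^3$, the map $(\nu,j)\colon\tilde C\to X\times\PP^3$ is a closed immersion into the smooth projective separably rationally connected variety $X\times\PP^3$; since the projection $X\times\PP^3\to X$ carries algebraic equivalence to algebraic equivalence and sends rational curves to rational curves or points, it is enough to prove the assertion for the embedded curve $\tilde C\subset X\times\PP^3$. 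Renaming $X\times\PP^3$ as $X$, we may thus assume that $C\subset X$ is a smooth projective curve embedded in a smooth projective SRC variety.

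First I would produce a high‑genus smoothing. Using that $X$ is SRC, I attach very free rational curves $R_1,\dots,R_p$ to $C$ — some of them meeting $C$ at two points so as to raise the arithmetic genus to some $g\ge 2$ (as in Remark \ref{nodal}), and enough others meeting $C$ at one point to force the vanishing below — to form a connected nodal curve $C_0=C\cup R_1\cup\dots\cup R_p\subset X$, and I smooth it: a flat family $q\colon\Sigma\to T$ of curves in $X$ over a smooth connected pointed affine curve $(T,0)$, with $\Sigma_0=C_0$, with general fiber $D:=\Sigma_t$ a smooth projective curve of genus $g$, and — by Lemma \ref{lem:van} applied to the bundle $T_X|_\Sigma$ exactly as in the proof of Proposition \ref{van}, the relevant hypothesis $H^1(C,L\otimes T_X|_C)=0$ for $\deg L\gg 0$ being supplied by Lemma \ref{vanishing} — with $H^1(D,T_X|_D)=0$. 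Let $f\colon D\hookrightarrow X$ be the inclusion of a general fiber; since $g\ge 2$ it is a stable map, its image lies in $X^{\mathrm{sm}}=X$, and we set $\beta:=f_*[D]$. As cycles on $X$ we have $f_*[D]=[D]\sim_{\mathrm{alg}}[\Sigma_0]=[C]+\sum_i[R_i]$.

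Next I would apply Lemma \ref{surjectivity}: the moduli space $\overline M_{g,0}(X,\beta)$ is smooth at $[f]$, and the forgetful map $\Phi\colon\overline M_{g,0}(X,\beta)\to\overline M_{g,0}$, restricted to the unique irreducible component $\mathcal M$ through $[f]$, is surjective. For $g\ge 2$ the space $\overline M_{g,0}$ contains a point $[C^\star]$ represented by a stable curve all of whose components are rational — for instance the ``banana'' curve consisting of two copies of $\PP^1$ glued transversally at $g+1$ points, whose dual graph has first Betti number $g$ and all vertices of valence $g+1\ge 3$. Pick, by surjectivity, a point $[f'\colon C'\to X]\in\mathcal M$ with $\Phi([f'])=[C^\star]$, so that the stabilization of $C'$ is $C^\star$. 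By Lemma \ref{contraction} the components of $C'$ contracted by the stabilization are rational; the remaining components of $C'$ are mapped birationally onto the components of $C^\star$, and as the latter have rational normalization so do the former. Hence every component of $C'$ is rational, and $f'_*[C']$ is an effective $\ZZ$‑linear combination of classes of rational curves in $X$ (components collapsed to points contributing $0$). Since $[f]$ and $[f']$ lie in the irreducible, hence connected, component $\mathcal M$, the cycles $f_*[D]$ and $f'_*[C']$ are algebraically equivalent; combining this with the relation of the previous paragraph gives
\[
[C]\;=\;f_*[D]-\sum_i[R_i]\;\sim_{\mathrm{alg}}\;f'_*[C']-\sum_i[R_i],
\]
a $\ZZ$‑linear combination of rational curves. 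This proves the theorem for $C$, hence for all $1$‑cycles.

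The step I expect to be the real obstacle is the construction of the nodal smoothing: one must simultaneously achieve arithmetic genus $\ge 2$, keep the whole image in the smooth locus, and force $H^1(D,T_X|_D)=0$. This is exactly where the Koll\'ar--Miyaoka--Mori comb‑smoothing machinery (Proposition \ref{van}, Lemma \ref{lem:van}) and the passage to $X\times\PP^3$ (Remark \ref{nodal}) are needed, and it is the only place where \emph{separable} rational connectedness — which supplies very free rational curves in every characteristic — is essential; everything afterwards (the surjectivity of $\Phi$ and the decomposition forced by a maximally degenerate point of $\overline M_{g,0}$) is formal given Lemmas \ref{surjectivity} and \ref{contraction}. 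The only remaining loose end is the reduction to $X$ projective in positive characteristic, where resolution of singularities cannot be invoked as in Remark \ref{red}; but this affects only the initial reduction, not the core argument.
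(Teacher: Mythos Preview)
Your overall strategy matches the paper's: produce a genus $\geq 2$ smoothing $D$ with $H^1(D,f^*T_X)=0$, apply Lemma \ref{surjectivity}, and pull back a maximally degenerate point of $\overline{M}_{g,0}$ via Lemma \ref{contraction}. The only substantive divergence is the one you yourself flag, and it is not a loose end but the crux of the positive-characteristic argument.

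The paper does \emph{not} reduce to $X$ projective via Remark \ref{red}. It stays on the proper variety $X$ and uses Proposition \ref{van} with $d=-1$ (rather than your $d=0$) to arrange the stronger vanishing $H^1(C,f^*T_X(-p))=0$ for every $p\in C$. Only then does it invoke Chow's lemma to obtain a normal projective $X'$ with a birational morphism $\pi\colon X'\to X$; the exceptional locus has codimension $\geq 2$ in $X$, so there is an open $U\subset X$ with complement of codimension $\geq 2$ isomorphic to an open $V\subset X'$. The twisted vanishing is precisely what allows one to slide a general deformation of $C$ entirely into $U$: from the sequence $0\to f^*T_X(-p)\to f^*T_X\to T_{X,f(p)}\to 0$ one sees that $H^0(C,f^*T_X)\to T_{X,f(p)}$ is surjective for every $p$, so the universal deformation dominates $X$ pointwise and a general member avoids any fixed codimension-$\geq 2$ locus. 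Once $C$ lies in $U\cong V$, upper semicontinuity gives $H^1(C,f^*T_{X'})=0$, the image lands in the smooth locus of $X'$, and Lemma \ref{surjectivity} applies on the projective $X'$; pushing forward along $\pi$ returns the relation to $X$. Thus the missing idea is: demand vanishing twisted by a point, and use that extra positivity together with Chow's lemma in place of resolution of singularities.
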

\begin{proof}
Choose an irreducible curve $f: C \to X$. We may assume the genus of $C$ is at least $2$ by Remark \ref{nodal}. Then by Proposition \ref{van}, we may also assume that $H^1(C, f^* T_X(-p))=0$ for any point $p$ in $C$ (up to adding very free rational curves and smoothing). Note that $H^1(C, f^* T_X)$ also vanishes in this case. By Chow's lemma, there is a normal projective variety $X'$ with a birational morphism $\pi: X' \to X$. The exceptional locus of $\pi$ has codimension at least $2$ in $X$. Thus there is a open subset $U$ of $X$, whose compliment has codimension at least $2$, and is isomorphic to an open subset $V$ of $X'$. Since $H^1(C, f^* T_X(-p))=0$, a general deformation of $C$ lies in $U$. By abuse of notations, we still write the general deformation as $C$. Then by upper semi-continuity, $H^1(C, f^* T_U)=H^1(C, f^* T_X)=H^1(C, f^*T_{X'})=0$. And it suffices to prove the statement for the map $f:C \to U \cong V \subset X'$.

Finally Lemma \ref{surjectivity} implies that the forgetful map
\[
\Phi: \overline{M}_{g,0}(X', [C]) \to \overline{M}_{g,0}
\]
 is surjective when restricted to the irreducible component of the Kontsevich moduli space of stable map $\overline{M}_{g, 0}(X', [C])$ containing the point represented by $(f: C \to X')$. The moduli space $\overline{M}_{g,0}$ is irreducible \cite{deligne-mumford}. So one can specialize the image of $C$ in $\overline{M}_{g,0}$ to a singular stable curve whose irreducible components are all rational curves. And then choose any preimage of the point in the irreducible component of $\overline{M}_{g,0}(X', [C])$ containing $(f: C \to X')$. By Lemma \ref{contraction}, the preimage is represented by a map from a curve with only rational components to $X'$. Thus the curve class $[C]$ is algebraically equivalent to the class of a union of rational curves.
\end{proof}
\begin{rem}
An elliptic curve without any marked point is not stable. So we increase the genus first to get a stable curve. One can also use $\overline{M}_{g,1}$ and slightly modify Lemma \ref{surjectivity} to adapt to this case. 
\end{rem}

\section{One cycles on low degree complete intersections}

In this section, we prove the following theorem by the same technique as in previous sections.
\begin{thm}
Let $X$ be a smooth complete intersection of type $(d_1, \ldots, d_c)$ over an algebraically closed field $k$. Assume $d_1+\ldots+d_c\leq n-1 $. If $Char(k)=p$, also assume  that $X$ is separably rationally connected. Then the Chow group of $1$-cycles $CH_1(X)$ is generated by lines.
\end{thm}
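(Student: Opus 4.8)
The plan is to reduce the statement to the two results already established: the main theorem (that $CH_1(X)$ is generated by rational curves) and the previous theorem (that every rational curve on a complete intersection of type $(d_1,\ldots,d_c)$ with $\sum d_i \le n-1$ is algebraically equivalent to an effective sum of lines). Combining these, every $1$-cycle is rationally equivalent to a $\ZZ$-linear combination of rational curves, and each rational curve is algebraically equivalent to an effective sum of lines; so every $1$-cycle is \emph{algebraically} equivalent to a $\ZZ$-linear combination of lines. The remaining task is to upgrade this from algebraic equivalence to rational equivalence, for which I would invoke the same mechanism used to prove the main theorem: the divisibility of $CH_1(X)_{alg}$ (Lemma \ref{divisible}) together with Proposition \ref{kollarlemma} applied to $X$ (which is rationally chain connected, being SRC), yielding Corollary \ref{sur} — namely that $CH_1(X)_{alg}$ is itself generated by cycles with rational components, hence (applying the previous theorem again) by sums of lines.

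Concretely, the steps in order are as follows. First, by Theorem \ref{thm:main}, write any $1$-cycle $Z$ as $Z \sim_{r.e.} \sum_i n_i [R_i]$ with the $R_i$ rational curves on $X$. Second, since $X$ is a complete intersection with $\sum d_i \le n-1$, the previous theorem gives $[R_i] \sim_{alg} \sum_j a_{ij}[\ell_{ij}]$ with $\ell_{ij}$ lines and $a_{ij}\ge 0$; hence $Z - \sum_{i,j} n_i a_{ij}[\ell_{ij}] \in CH_1(X)_{alg}$. Third, apply Corollary \ref{sur}: since $X$ is SRC it is in particular RCC, so $CH_1(X)_{alg}$ is generated by pushforwards of rational curves $F_k$ in a fixed family; applying the previous theorem once more to each $F_k$, these too are algebraically equivalent to sums of lines, and by divisibility the relation $N\cdot(\text{anything in }CH_1(X)_{alg})$ can be solved back, so in fact every element of $CH_1(X)_{alg}$ is a $\ZZ$-linear combination of line classes. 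Combining, $Z$ is rationally equivalent to a $\ZZ$-linear combination of lines, which is the assertion.

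I do not expect any genuine obstacle here: all the hard work is done in the cited results, and this theorem is a formal consequence, exactly parallel to how Theorem \ref{thm:main} follows from Theorem \ref{maintheorem1} via Lemma \ref{alg}. The only point requiring a small amount of care is making sure the previous theorem is applied not just to the rational curves appearing in the decomposition of $Z$, but also to the rational curves generating $CH_1(X)_{alg}$ coming from Proposition \ref{kollarlemma}; but since that proposition produces rational curves \emph{on $X$}, and $X$ is our complete intersection, the hypothesis $\sum d_i \le n-1$ applies to them verbatim. One could phrase the whole argument most cleanly by first proving the algebraic-equivalence version — every $1$-cycle on $X$ is algebraically equivalent to a $\ZZ$-linear combination of lines — and then quoting Lemma \ref{alg} (whose proof uses only Corollary \ref{sur}, which in turn needs only rational chain connectedness, guaranteed by the SRC hypothesis) to pass to rational equivalence. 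That is the route I would write up.
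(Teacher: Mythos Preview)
Your outline is almost right, and steps 1--2 are exactly the reduction the paper makes. The gap is in step 3. Corollary \ref{sur} tells you that $CH_1(X)_{alg}$ is generated (in rational equivalence) by classes of rational curves $F_k$. You then invoke Theorem \ref{thm:hypersurface} to replace each $F_k$ by a sum of lines --- but that theorem only gives \emph{algebraic} equivalence. So you obtain $[F_k] \in L + CH_1(X)_{alg}$, where $L$ is the subgroup generated by lines, and hence $CH_1(X)_{alg} \subset L + CH_1(X)_{alg}$, which is vacuous. The appeal to divisibility does not rescue this: divisibility lets you divide by $N$ inside $CH_1(X)_{alg}$, but it does not turn an algebraic-equivalence relation into a rational-equivalence one. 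Likewise, quoting Lemma \ref{alg} verbatim does not help, because its proof rests on Corollary \ref{sur}, which produces rational curves, not lines; you need a \emph{lines} version of Corollary \ref{sur}, and that is precisely the statement you are trying to prove.

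The paper closes this loop by a sharper input you did not use: under the hypothesis $\sum d_i \le n-1$, the variety $X$ is rationally chain connected \emph{by chains of lines} (Koll\'ar, Chap.~V, Lemma 4.8.1). One can therefore run Proposition \ref{kollarlemma} with the family $F\to B$ taken to be the universal line over the Fano scheme $F(X)$, obtaining that the cokernel of $CH_0(F(X))_{alg}\to CH_1(X)_{alg}$ is $N$-torsion; combined with divisibility this gives directly that $CH_1(X)_{alg}$ is generated by line classes. With this in hand, the analogue of Lemma \ref{alg} for lines goes through, and then Theorem \ref{thm:main} plus Theorem \ref{thm:hypersurface} finish the job exactly as you outlined. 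In short: replace your step 3 by ``$X$ is RCC by lines, so Proposition \ref{kollarlemma} applies with lines'' and the proof is complete.
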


By the assumption, $X$ is rationally chain connected by chains of lines (Lemma 4.8.1, Chap. V of \cite{kollarbook}). Let $F(X)$ be the Fano scheme of lines of $X$.  Then
\begin{enumerate}
\item $CH_0(F(X))\otimes \QQ \to CH_1(X) \otimes \QQ$ is surjective ( Proposition 3.13.3, Chap. IV \cite{kollarbook}).
\item The cokernel of $CH_0(F(X))_{alg} \to CH_1(X)_{alg}$ is annihilated by an integer $N$ (c.f. Proposition \ref{kollarlemma}).
\end{enumerate}
Thus it suffices to show that every curve is algebraically equivalent to an integral sum of lines by the same argument as in Lemma \ref{alg}. By the main theorem \ref{thm:main}, it suffices to show that every rational curve is algebraically equivalent to an integral sum of lines.

\begin{thm}\label{thm:hypersurface}
Let $X$ be a (possibly singular) complete intersection of type $(d_1, \ldots, d_c)$ with  $d_1+\ldots+d_c\leq n-1 $ in $\PP(V) \cong \PP^n$. Then every rational curve on $X$ is algebraically equivalent to an effective sum of lines.
\end{thm}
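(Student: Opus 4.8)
The plan is to degenerate a given rational curve inside the linear system of complete intersections and use the low-degree hypothesis to force the breaking into lines. Fix a rational curve $g: \PP^1 \to X \subset \PP^n$ of degree $e$. The key numerical input is that for a complete intersection of type $(d_1,\dots,d_c)$ with $d_1+\dots+d_c \le n-1$, a general rational curve of degree $e$ in $\PP^n$ lies in such a complete intersection and, more importantly, the space of rational curves of degree $e$ on $X$ is itself rationally connected (or at least rationally chain connected) when $e$ is small; but for large $e$ one cannot stay irreducible. So first I would reduce to understanding how an arbitrary degree-$e$ rational curve degenerates: consider the space $\mathrm{Mor}_e(\PP^1, X)$ (or the Kontsevich space $\overline{M}_{0,0}(X,e\ell)$ where $\ell$ is the line class), and study the component $Z$ containing $[g]$.

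The main step is a bend-and-break / smoothing argument run in the opposite direction from earlier sections. I would attach very free rational curves (which exist since $X$ is separably rationally connected under our hypotheses, or directly since $d_1+\dots+d_c\le n-1$ gives lines through general points) to $g$ to form a comb, smooth it to a rational curve $g'$ of larger degree that is free and passes through enough general points, and then degenerate $g'$ within $X$ by moving its image so that it breaks. Concretely, using that $X$ contains lines through a general point and that two general points of $X$ are joined by a chain of lines of bounded length (Lemma 4.8.1, Chap. V of \cite{kollarbook}), I would induct on the degree $e$: a free rational curve of degree $e \ge 2$ in $X$ can be specialized to a curve of the form (line) $\cup$ (free curve of degree $e-1$) — this is where the ample normal bundle / positivity from $d_1+\dots+d_c \le n-1$ is used to guarantee the specialization stays in $X$ and the generic member of the relevant Hilbert-scheme component is irreducible of degree $e$ while a boundary member is reducible. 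Pushing this relation down, $[g'] = [\ell] + [\text{degree } e-1]$ in $CH_1(X)_{alg}$ or directly in algebraic equivalence, and iterating reduces everything to lines.

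To make "degenerate $g'$ so that it breaks off a line" precise, I would argue as follows. Since $g'$ is free of degree $\ge 2$, the evaluation map from the universal curve over $Z$ to $X$ is dominant, and fixing a general point $x \in \mathrm{im}(g')$ together with a general line $L$ through $x$, one looks at the sublocus of $Z$ of curves passing through $x$; a dimension count (using $\deg g'^*T_X$ computed from $T_X|_X$, which is controlled because the conormal bundle of $X$ in $\PP^n$ has the $d_i$'s as twists, and $d_1+\dots+d_c\le n-1$) shows this sublocus has a boundary point where the curve degenerates to $L \cup (\text{residual of degree } e-1)$, with the residual again free. The genuinely delicate point — the main obstacle — is ensuring that at each stage the residual curve is \emph{again} free (or at least lies in a component whose general member is irreducible), so that the induction can continue; this is exactly the kind of positivity bookkeeping that the degree bound $d_1+\dots+d_c\le n-1$ is designed to supply, via the fact that the restricted tangent bundle $T_X|_{\PP^1}$ for a free curve has all summands of degree $\ge 1$ after suitable attaching, but one must check it survives the breaking. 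Once the induction bottoms out, every rational curve on $X$ is algebraically equivalent to an effective integral sum of lines, which is the assertion of Theorem \ref{thm:hypersurface}; combined with the reductions preceding it, this yields that $CH_1(X)$ is generated by lines.
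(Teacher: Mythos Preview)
Your approach via bend-and-break is genuinely different from the paper's, and as written it has a real gap. The paper does not use deformation theory of curves on $X$ at all; instead it works entirely in the linear parameter space $\PP = \PP(\mathrm{Hom}(V^*, H^0(\PP^1, \OO(e))))$ of $(n+1)$-tuples of degree-$e$ forms. The subscheme $\PP_X \subset \PP$ cut out by the conditions $F_i(u_0,\dots,u_n)=0$ is defined by $e(d_1+\dots+d_c)+c$ homogeneous equations, and the locus $B$ where all $u_j$ are proportional (so the ``map'' is constant) has small dimension $n-c+e$. Hartshorne's connectedness theorem (Lemma~\ref{connectedness}) then shows $\PP_X \setminus B$ is connected exactly when $e(n - \sum d_i) > 1$, i.e.\ for all $e \ge 2$ under the hypothesis $\sum d_i \le n-1$. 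One connects the given degree-$e$ map to an $e$-fold cover of a line by a chain of curves in $\PP_X \setminus B$; whenever the chain passes through the base-point locus $D$ (tuples with a common zero), the limiting stable map has every component of degree strictly less than $e$, and induction on $e$ finishes. No smoothness of $X$, no tangent bundle, no freeness hypothesis is ever invoked.

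Your argument, by contrast, needs $X$ smooth even to speak of free or very free curves, so it cannot prove the theorem as stated (``possibly singular''). Even restricting to smooth $X$, the step you correctly flag as delicate is actually missing: bend-and-break on a free curve through a fixed point $x$ produces \emph{some} reducible limit, but there is no mechanism forcing any component to be the pre-chosen line $L$, nor forcing the residual piece to be free. Iterated bend-and-break can stall at non-free curves of degree $\ge 2$. This can be repaired in the smooth separably rationally connected case with substantial additional work (irreducibility of the mapping space, showing trees of free lines lie in the closure of the component of a given free curve), but the paper's parameter-space-plus-connectedness argument sidesteps all of it and gets the singular case for free.
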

 We will need the following connectedness result of Hartshorne.
\begin{lem}[(Hartshorne, \cite{Connectedness})]\label{connectedness}
Let $X$ be a subscheme in $\PP^N$ defined by $M$ homogeneous polynomials. And let $Y$ be a closed subset of $X$ with dimension less than $N-M-1$. Then $X-Y$ is connected.
\end{lem}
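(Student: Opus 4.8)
The statement is Hartshorne's connectedness theorem: $X=V(f_1,\dots,f_M)\subseteq\PP^N$ is connected in codimension one, in the sense that it cannot be disconnected by deleting anything of dimension $<N-M-1$. We may assume $N-M-1\ge 0$ (equivalently $\dim X\ge 1$), since otherwise $Y=\emptyset$ is the only admissible set and the assertion reduces to the connectedness of a positive--dimensional set-theoretic complete intersection. My plan is to reduce the topological claim to a purely combinatorial statement about the irreducible components of $X$, and to attack that statement by induction on the number $M$ of defining equations.

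First I would record the basic dimension bound: since $X$ is cut out by $M$ equations, Krull's height theorem forces every irreducible component $X_i$ to have codimension at most $M$, so $\dim X_i\ge N-M$. Let $X_1,\dots,X_r$ be the components and form the graph $G$ on $\{X_1,\dots,X_r\}$ with an edge joining $X_i,X_j$ exactly when $\dim(X_i\cap X_j)\ge N-M-1$. The elementary but crucial observation is that connectedness of $G$ already implies the lemma: each $X_i$ is irreducible of dimension $\ge N-M>\dim Y$, so $X_i-Y$ is irreducible, hence nonempty and connected; and if $X_i,X_j$ are $G$-adjacent then $\dim(X_i\cap X_j)\ge N-M-1>\dim Y$, whence $(X_i\cap X_j)\setminus Y\ne\emptyset$ and the connected pieces $X_i-Y$ and $X_j-Y$ overlap. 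Gluing the pieces along a connected $G$ shows $X-Y=\bigcup_i(X_i-Y)$ is connected. Everything therefore reduces to the claim that $G$ is connected. This I would prove by induction on $M$, the case $M=0$ (where $X=\PP^N$ is irreducible and $G$ is a single vertex) being trivial. For the inductive step write $X=X'\cap V(f_M)$ with $X'=V(f_1,\dots,f_{M-1})$; by induction the analogous graph $G'$ on the components $X'_a$ of $X'$, with edges recording $\dim(X'_a\cap X'_b)\ge N-M$, is connected. Every component of $X$ lies in some $X'_a$ and is a component of the hypersurface section $X'_a\cap V(f_M)$, and one wants to propagate $G'$-connectedness to $G$-connectedness by (i) joining, inside a single irreducible $X'_a$ (of dimension $\ge N-M+1\ge 2$), the components of $X'_a\cap V(f_M)$, and (ii) bridging $G'$-adjacent $X'_a,X'_b$: their common locus has dimension $\ge N-M$, so cutting by $f_M$ leaves a subvariety $W$ of dimension $\ge N-M-1$ lying in $X$ and linking the portions of $X$ lying over $X'_a$ and over $X'_b$.

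The main obstacle is step (i): I must know not merely that a hypersurface section $X'_a\cap V(f_M)$ of an irreducible projective variety is connected, but that its components are already joined through intersections of dimension $\ge N-M-1$ — a priori two codimension-one components of such a section could meet in too small a dimension, so topological connectedness must be upgraded to codimension-one connectedness, which requires a projective dimension estimate for the intersection of two divisorial components. The cleanest way to settle exactly this delicate point, and in fact the route of Hartshorne's original argument, bypasses the geometry: pass to the affine cone $\hat X=\mathrm{Spec}(S/I)$, note that the \v{C}ech complex on the $M$ generators $f_1,\dots,f_M$ has length $M$ and therefore forces the local cohomology $H^i_I(-)$ to vanish for all $i>M$ (so the cohomological dimension of $I$ is at most $M$), and feed this together with the smallness of $\dim Y$ into a Mayer--Vietoris/long exact sequence comparing local cohomology supported at the vertex, at $I$, and at the cone over $Y$. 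A splitting $X-Y=U_1\sqcup U_2$ would then produce a nonzero class in a first local cohomology group that the vanishing forbids, yielding the contradiction. I would develop the component-graph induction as the conceptual skeleton and invoke this local cohomology computation precisely to supply the codimension-one control demanded in step (i).
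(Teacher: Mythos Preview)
The paper does not prove this lemma at all: it is quoted from Hartshorne's 1962 paper and invoked as a black box in the proof of the theorem on rational curves on complete intersections. So there is no argument in the paper to compare your proposal against.

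On the proposal itself: the reduction to connectedness of the component graph $G$ is correct and clean, and step~(ii) of the induction works as you say. The problem is that the induction on $M$ does not genuinely reduce the difficulty, and you have in fact already put your finger on why. Step~(i) asks that, for an irreducible $X'_a$ of dimension $\ge N-M+1$, the components of the hypersurface section $X'_a\cap V(f_M)$ be joined through intersections of dimension $\ge N-M-1$. But $X'_a$ is only an irreducible \emph{component} of $V(f_1,\dots,f_{M-1})$; it need not itself be cut out set-theoretically by few equations in $\PP^N$, so neither the inductive hypothesis nor the $M=1$ case of the lemma applies to it. What step~(i) demands is precisely codimension-one connectedness of a hypersurface section of an arbitrary irreducible projective variety, which is essentially the content of the theorem you are trying to prove. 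Your suggestion to ``invoke this local cohomology computation precisely to supply the codimension-one control demanded in step~(i)'' is therefore slightly off: the cohomological-dimension bound $\mathrm{cd}(I)\le M$ comes from the \v{C}ech complex on $f_1,\dots,f_M$ and lives on the cone over the original $X$, not on $X'_a$.

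The local-cohomology sketch in your final paragraph is indeed the heart of Hartshorne's own argument, and once carried out it proves the full lemma in one stroke for $X$ itself, with no reference to components or to induction on $M$. So the right reorganisation of your proposal is to drop the inductive scaffolding and present the cohomological argument directly: work on the completed local ring at the vertex of the affine cone, use the \v{C}ech bound to control the relevant local cohomology, and deduce that removing the cone over $Y$ (of dimension $\le N-M-1$) cannot disconnect the punctured spectrum. Your plan is correct in the end; it is just that all of its force is concentrated in that last paragraph.
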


\begin{proof}[Proof of Theorem \ref{thm:hypersurface}]
We use induction on the degree of the considered rational curve. The degree $1$ case is obvious.

Assume the statement is true for all rational curves whose degree is less than $e (\geq 2)$. Fix a copy $C$ of $\PP^1$, and consider the parameter space
\[
\PP:= \PP( Hom(V^*,H^0(C, \OO(e))))
\]

If we choose homogeneous coordinates $x_0,\ldots, x_n$ on $\PP(V)$, then $\PP$ parametrizes $(n+1)$-tuples $[u_0,...,u_n]$ of homogeneous degree $e$ polynomials on $C$.

Let $X$ be a complete intersection of codimension $c$ of type $(d_1, \ldots, d_c)$ defined by nonzero degree $d_i$ homogeneous polynomials $F_i (1 \leq i \leq c)$ on $\PP(V)$.  Define $\PP_X$ to be the closed subset of $\PP$ parameterizing $[u_0,...,u_n]$ such that $F_i(u_0,...,u_n)$ equals $0$ for all $1 \leq i \leq c$.  The subvariety $\PP_X$ is defined by homogeneous polynomials of degree $$\underbrace{d_1, \ldots, d_1}_{ed_1+1}, \ldots, \underbrace{d_c, \ldots, d_c}_{ed_c+1} $$ on $\PP$.

Let $B \subset \PP$ be the closed subvariety parameterizing tuples $[u_0, \ldots, u_n]$ where $span\{u_0,...,u_n\}$ in $H^0(C, \OO(e))$ is $1$-dimensional, i.e., every pair $u_i, u_j$ satisfies a scalar linear relation.  The codimension of $B$ in $\PP$ is $ne$.

Let $D \subset \PP$ be the closed subvariety parameterizing $(n+1)$-tuples $[u_0,...,u_n]$ that have a common zero in $C$. Clearly $D$ contains $B$. The point outside $D$ parametrizes a degree $e$ map from $C$ to $\PP^n$.

Now we see that $\PP_X \cap B$ parameterizes a degree $e$ polynomial in two variables (up to scaling) and a point in $X$. So its dimension is $n-c+e$. Then in the situation of Lemma \ref{connectedness}, take $Y$ to be $\PP_X \cap B$ and
$N=ne+n+e$, $M=e(d_1+...+d_c)+c$. The condition $n-c+e <N-M-1$ is simply $$e(n-d_1-d_2-...-d_c)>1.$$ Therefore $\PP_X - B$ is connected for all $e\geq 2$.

Now let $u = [u_0, \ldots, u_n]$ be any element of $\PP_X - D$, e.g., a parameterized, degree $e$ morphism from $C$ to $X$. Let $u'$ be a degree $e$ multiple cover of a line in $X$. Then $u'$ is algebraically equivalent to a union of lines.  Since $\PP_X-B$ is connected, there exists a connected curve mapping to $\PP_X-B$ whose image
connects these two points.  We may assume that it is a chain of irreducible components, with each consecutive pair meeting in a single node.

If none of the nodes is contained in $D$, then after deleting finitely many points from the curve, none of which are the nodes, we may assume the entire connected curve is disjoint from $D$.  Thus this connected curve parameterizes a family of stable maps.  Then $u$ is algebraically equivalent to $u'$, and hence algebraically equivalent to a union of lines.

So it remains to consider the case when some of the nodes lie in $D$ but not in $B$. Consider the first such node, starting from the component containing $u$. Approaching the node from the component closer to u, we have a family of elements in the complement of $D$ which approaches a point of $D$, i.e., we have a family of stable maps which approaches a point of $D$.  But this is a point of $D$ which is not in $B$.  So the corresponding rational map from $C$ to $X$ is non-constant of degree strictly less than $e$.  It follows that the limiting stable map has at least one component of positive degree which is strictly less than $e$, i.e., this is a point in the boundary.  But then, each component of this boundary stable map has degree strictly less than $e$, hence by induction each component is algebraically equivalent to a union of lines. Thus $u$ is also algebraically equivalent to a union of lines.
\end{proof}

\begin{rem}[(suggested by C. Voisin)]\label{rem:v}
The argument presented here proves that for a smooth Fano complete intersection of type $(d_1, \ldots, d_c)$ such that $d_1+\ldots+d_c\leq n-1$, the Griffiths group of one cycles homologous to $0$ modulo algebraic equivalence is trivial. Indeed, this follows from the statement that every one cycle is rationally equivalent to a linear combination of lines and the fact that the Fano scheme of lines is connected in all but one case, i.e. the quadric surface in $\PP^3$ (\cite{kollarbook}, Theorem 4.3, Chap. V, \cite{zongb}, Theorem 1.3). But the statement is easily seen to be true for the quadric surface.
\end{rem}

\section{Remarks on the product trick}
The product trick gives a weak form of Proposition \ref{kollarlemma}.

\begin{prop}\label{product1}
Let $X$ be a proper rationally connected variety over an algebraically closed field, then $CH_1(X)\otimes_{\mathbb{Z}} \mathbb{Q}$ is generated by rational curves.
\end{prop}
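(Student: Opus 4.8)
The quickest route is to note that the assertion is already contained in Proposition \ref{kollarlemma}. A rationally connected variety is in particular rationally chain connected (simply forget the generic smoothness of the family), so that proposition supplies a positive integer $N$ such that every $1$-cycle $D$ on $X$ satisfies $N\cdot D\sim_{r.e.}\sum_i m_i\,u_*(F_i)$ for integers $m_i$ and rational curves $F_i$ lying in fibres of $g$. Each $u_*(F_i)$ is a rational curve or zero, so in $CH_1(X)\otimes\QQ$ one has $[D]=\tfrac1N\sum_i m_i[u_*(F_i)]$, a $\QQ$-linear combination of classes of rational curves. This works in every characteristic, and the denominator $N$ is precisely what forces the restriction to $\QQ$ here, in contrast with the integral statement of Theorem \ref{thm:main} for separably rationally connected $X$.

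Since the point of this section is the product trick, I would also record the proof it yields, which does not invoke Proposition \ref{kollarlemma}. By Corollary \ref{sur} the subgroup $CH_1(X)_{alg}$ is generated by rational curves, so it suffices to show that every irreducible curve $C\subset X$ has a positive multiple algebraically equivalent to a $\ZZ$-linear combination of rational curves; reducing as in Remark \ref{red} (resolution of singularities in characteristic $0$, an alteration otherwise, the degree of which is immaterial after $\otimes\QQ$) we may assume $X$ smooth projective. Then I would run the argument of Theorem \ref{maintheorem1}: put $Y=X\times\PP^1$ with projection $\pi\colon Y\to\PP^1$, lift $C$ to the vertical curve $C\times\{0\}$, attach the free rational curves $\{x\}\times\PP^1$ through general points $x$ of $C$ to form a comb, and smooth a large enough subcomb (\cite{kollarbook}, Chap. II, Theorem 7.9); a general such smoothing is irreducible and dominates $\PP^1$, hence is a multisection $\Gamma$ of $\pi$. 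Applying the main construction of \cite{ghs} (Theorem \ref{main}) one deforms $\Gamma$, after attaching further rational curves, to a curve flexible relative to $\pi$, which by Proposition \ref{degeneration} degenerates to an effective sum of rational curves. Comparing classes modulo algebraic equivalence makes $[C\times\{0\}]$ a $\ZZ$-linear combination of rational curves in $Y$, and pushing this relation forward along $Y\to X$ (a rational curve going to a rational curve or a point) gives the claim.

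In the product-trick route, the step I expect to be the real obstacle is the use of Theorem \ref{main} in characteristic $p>0$: if $X$ is rationally connected but not separably rationally connected, the fibres of $\pi\colon Y\to\PP^1$ inherit this defect, and neither the original construction of \cite{ghs} nor its positive-characteristic refinement \cite{starr-jong} applies to $\pi$ as stated. One would have to substitute a version valid for multisections of suitably divisible degree --- equivalently, work over a generically finite (in the worst case purely inseparable) cover with separably rationally connected fibres --- so that again only $\QQ$-coefficients come out. In characteristic $0$ the fibres of $\pi$ are separably rationally connected, Theorem \ref{main} applies verbatim, and the product trick in fact reproves the integral statement, which is however already subsumed by Theorem \ref{thm:main}. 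For this reason I would, in practice, simply deduce the proposition from Proposition \ref{kollarlemma}.
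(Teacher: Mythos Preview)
Your first route via Proposition \ref{kollarlemma} is certainly valid, but note the sentence introducing this proposition in the paper: the point is that the product trick \emph{gives a weak form of} Proposition \ref{kollarlemma}, so deducing the statement from \ref{kollarlemma} is exactly what is being avoided. Your second route is circular in the same way, since it opens by invoking Corollary \ref{sur}, itself a corollary of Proposition \ref{kollarlemma}.

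More to the point, you have guessed the wrong product. The trick here is not $X\times\PP^1$ fibred over $\PP^1$ (that is the Section~4 argument), but $C\times X$ fibred over $C$. For an irreducible curve $C\subset X$, the graph $\Gamma\subset C\times X$ and a constant section $C'=C\times\{x\}$ are both sections of $C\times X\to C$. The generic fibre $X_{K(C)}$ is rationally connected, so over $\overline{K(C)}$ a rational curve joins the two generic points; this descends to a finite extension $K(S)/K(C)$ and spreads out to a ruled surface over $S$ carrying (the pullbacks of) $\Gamma$ and $C'$ as sections. Two sections of a ruled surface are rationally equivalent modulo fibre classes, and pushing forward to $X$ yields $\deg(S/C)\cdot[C]\sim_{r.e.}(\text{rational curves})$, since the constant section dies. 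The degree of $S\to C$ is the only denominator, and that is where $\otimes\,\QQ$ enters.

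This is far lighter than your proposal: no smoothness or projectivity reduction, no GHS or flexible curves, and it works uniformly in every characteristic with only the bare definition of rational connectedness (to produce the connecting rational curve over an extension of $K(C)$). Your $X\times\PP^1$ route is essentially the full force of Theorem \ref{maintheorem1} and, as you rightly flag, does not go through for merely rationally connected $X$ in characteristic $p$.
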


\begin{proof}
For any curve $C$ in $X$, consider the graph $\Gamma \subset C \times X$. Take any point $x\in X$. Let $C'$ be the trivial section $x\times C\subset X\times C$. Since $X\times_C K(C)$ is a rationally connected variety over the function field $K(C)$ of $C$, after a base change $S\to C$ there is a rational curve in $X\times_{C} K(S)$ connecting $\Gamma \times_{C} K(S)$ and $C' \times_{C} K(S')$. Thus there is a ruled surface with two sections $\Gamma$ and $C'$. Recall again the fact that two sections of a ruled surface are rationally equivalent modulo fibers, which are all rational curves. The result follows from pushing forward everything to $X$ and the fact that the push-forward of $C'$ is $0$ in the Chow group of $X$.
\end{proof}

We give one more application of the product trick. Although, strictly speaking, the trick is not necessary, it does make the proof more transparent.

\begin{prop}\label{prop:RSC}
Let $X \subset \PP^n$ be a closed subscheme defined by equations of degree $d_1, \ldots, d_c$ such that $\sum_{i=1}^c d_i^2 \leq n$. Then $CH_1(X) \cong \ZZ$.
\end{prop}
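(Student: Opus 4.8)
\emph{Strategy.} The plan is to exhibit the class of a single line as a section of the degree homomorphism $\deg\colon CH_1(X)\to\ZZ$, the latter being the pushforward $CH_1(X)\to CH_1(\PP^n)\cong\ZZ$. One may assume that not all $d_i$ equal $1$, as otherwise $X$ is a linear subspace and $CH_1(X)\cong\ZZ$ is classical; then, since some $d_i\ge2$, one has $\sum_i d_i\le\bigl(\sum_i d_i^2\bigr)-2\le n-2$, so the Fano scheme of lines $F(X)$ has non-negative expected dimension, hence is non-empty, and the class of a line in $X$ maps to a generator of $CH_1(\PP^n)$; thus $\deg$ is surjective. It therefore suffices to prove: (A) $CH_1(X)$ is generated by the classes of lines contained in $X$; and (B) any two lines in $X$ are rationally equivalent. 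Granting (A) and (B) we get $CH_1(X)=\ZZ\cdot[\ell]$, and then $\deg$ is an isomorphism.

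For (A) I would show that every curve on $X$ is rationally equivalent to a $\ZZ$-combination of lines, in two steps. First, every curve is rationally equivalent to a $\ZZ$-combination of \emph{rational} curves; this is the role of the product trick, and one can see it either directly on $X$ — via Proposition \ref{product1} together with Corollary \ref{sur}, which only uses that $X$ is rationally chain connected, or by applying the main theorem \ref{thm:main} on a resolution of $X$ after moving the curve off the exceptional and singular loci — or, transparently, by passing to the projective cone $\widehat X\subset\PP^{n+1}$ of $X$: there $X$ is a hyperplane section, $\widehat X$ is swept out by the lines $L_x=\overline{vx}$ through the vertex, and the ``two sections of a ruled surface differ by fibres'' computation applied to $q^{-1}(C)\to C$ shows that each curve $C\subset X$ is a $\ZZ$-combination of the $[L_x]$ in $CH_1(\widehat X)$. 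Second, every rational curve on $X$ is rationally equivalent to a $\ZZ$-combination of lines — and here the hypothesis $\sum_i d_i^2\le n$ enters.

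For this second step I would rerun the proof of Theorem \ref{thm:hypersurface}. The parameter space $\PP_X$ of parametrized degree-$e$ morphisms $\PP^1\to X$ is cut out inside $\PP\bigl(\mathrm{Hom}(V^*,H^0(\PP^1,\OO(e)))\bigr)$ by $\sum_i(ed_i+1)$ forms whose degrees sum to $e\sum_i d_i^2+\sum_i d_i$; for every $e\ge1$ this is at most $(n+1)(e+1)-2=\dim\PP-1$, precisely because $\sum_i d_i^2\le n$ (using also $\sum_i d_i\le\sum_i d_i^2\le n$). Hence $\PP_X$ is of Fano type and, by the argument of Lemma 4.8.1, Chap.\ V of \cite{kollarbook}, is rationally chain connected. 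The connecting curve joining a given degree-$e$ morphism to an $e$-fold cover of a line, produced in the proof of Theorem \ref{thm:hypersurface}, can then be chosen to be a chain of \emph{rational} curves in $\PP_X\setminus B$; since a family of $1$-cycles on $X$ parametrized by a rational curve has all of its members \emph{rationally} (not merely algebraically) equivalent, this upgrades the conclusion of Theorem \ref{thm:hypersurface} from algebraic to rational equivalence, the induction on $e$ and the analysis of the loci $D$ (common zero) and $B$ (scalar relations) being exactly as there.

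Statement (B) follows because $F(X)$ is rationally chain connected whenever $\sum_i\binom{d_i+1}{2}\le n$ (proof of Theorem 4.2, Chap.\ V of \cite{kollarbook}), and $\sum_i\binom{d_i+1}{2}=\sum_i\frac{d_i(d_i+1)}{2}\le\sum_i d_i^2\le n$ since each $d_i\ge1$; the single exceptional case, the quadric surface in $\PP^3$, is immediate (cf.\ Remark \ref{rem:v}). The step I expect to be the main obstacle is the absence of any smoothness or rational-connectedness hypothesis on $X$: one must apply ``Fano complete intersection $\Rightarrow$ rationally chain connected'' in a form that remains valid when $\PP_X$ (and $F(X)$) is singular, reducible, or non-reduced, and it is exactly to create the slack this requires that one takes the stronger bound $\sum_i d_i^2\le n$ rather than $\sum_i\binom{d_i+1}{2}\le n$. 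A secondary point to check is that the rational chains in $\PP_X$ can be chosen with nodes avoiding $B$, which has large codimension $ne$, the remaining crossings with $D$ being absorbed into the degree induction as in Theorem \ref{thm:hypersurface}.
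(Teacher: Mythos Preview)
Your route differs from the paper's, and the first step has a genuine gap. The paper does \emph{not} pass through rational curves at all: given $C\subset X$, it considers the graph $\Gamma$ and a constant section $S=C\times\{x\}$ in $C\times X$ as two $K(C)$-points of $X_{K(C)}$, and observes that the scheme of \emph{chains of two lines} in $X$ through two given points (parametrized by the middle vertex $p\in\PP^n$) is cut out by forms whose degrees sum to $\sum_i d_i^2\le n$. Since $K(C)$ is $C_1$ by Tsen's theorem, this scheme has a $K(C)$-rational point; spreading out gives a chain of two ruled-by-lines surfaces over $C$ with $\Gamma$ and $S$ as outer sections, and the usual ``two sections of a ruled surface differ by fibres'' yields $[C]\sim_{r.e.}\text{(integral sum of lines)}$ in $X$ directly. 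Your step (B) then finishes.

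The problem with your first step is that none of the three options produces the integral statement on an arbitrary closed subscheme $X$:
\begin{itemize}
\item Proposition~\ref{product1} plus Corollary~\ref{sur} does not yield ``every curve is a $\ZZ$-combination of rational curves''. Proposition~\ref{product1} introduces a base change $S\to C$ and hence a multiplicity $N$; combining with Corollary~\ref{sur} through the mechanism of Lemma~\ref{alg} still needs $B_1(X)=CH_1(X)/CH_1(X)_{alg}$ to be generated by rational curves, which is exactly what Theorem~\ref{thm:main} supplies \emph{under SRC}. Even granting your second step and (B), one only gets $N([C]-e[\ell])=0$ together with $CH_1(X)_{alg}=0$, and nothing rules out $N$-torsion in $CH_1(X)$.
\item Applying Theorem~\ref{thm:main} to a resolution requires the resolution to be SRC and requires moving $C$ off the singular and exceptional loci; neither is available for a possibly non-reduced, reducible scheme in arbitrary characteristic.
\item The cone computation yields $[C]=\sum[L_{x_i}]$ in $CH_1(\widehat X)$, but the $L_x$ pass through the vertex and are not contained in $X$; there is no map $CH_1(\widehat X)\to CH_1(X)$ transporting this relation back.
\end{itemize}
The point you are missing is precisely that Tsen's theorem removes the base change: it produces the connecting chain of lines over $K(C)$ itself, so the ruled-surface relation is integral from the outset. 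Your second-step idea---upgrading Theorem~\ref{thm:hypersurface} from algebraic to rational equivalence by using that $\PP_X$ is rationally chain connected under the stronger bound $\sum d_i^2\le n$---is correct and pleasant, but the paper's argument makes it unnecessary.
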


\begin{proof}
Let $f: C \to X$ be a curve in $X$. We first show that $C$ is rationally equivalent to a sum of lines in $X$.

Consider the graph $\Gamma$ of the map in $C \times X$. Choose a trivial section $S=C \times x$ of $C \times X \to C$, where $x$ is a closed point in $X$. Now think of the fibration over $C$ as a scheme $\mcX$ defined over $K(C)$, the function field of $C$, by equations of degree $d_1, \ldots, d_c$ and the graph and the trivial section as two rational points of $\mcX$. Under the assumptions on $d_i$'s, $X$ is rationally chain connected by chains of two lines. And furthermore, the scheme parameterizing the chain of two lines containing two points is defined by equations of degree
\[
(1, 2, \ldots, d_1-1, 1, 2, \ldots, d_1-1, d_1, \ldots, 1, 2, \ldots, d_c-1, 1, 2, \ldots, d_c-1, d_c).
\]
To see this, just consider the equations for the intersection point of the two lines. In particular, the scheme parameterizing chains of two lines containing the two rational points is a scheme defined over $K(C)$ by equations of the above degrees. Then by Tsen's theorem, there is a $K(C)$-rational point in the scheme. The chain of lines between the two rational points is equivalent to a chain of ruled surfaces over $C$ containing the two sections $\Gamma$ and $S$ as sections (of the ruled surface). So
\[
\Gamma \sim_{r.e.} S+ \text{lines}
\]
in $C \times X$. Then push forward the relations to $X$. Notice that the push-forward of $[S]$ is $0$. Thus we get the desired relation in $X$.

Finally note that any two lines in $X$ are rationally equivalent since the Fano scheme of $X$ is rationally chain connected under the assumptions on $d_i$.
\end{proof}

\begin{acknowledgements}
 We thank Professor Burt Totaro for introducing the question to us by his enlightening lectures, correcting many mistakes in the first draft of this paper, and helping us to form the final argument, Professor Claire Voisin for the argument of reducing rational equivalence to algebraic equivalence and Remark \ref{rem:v}, Professor J\'anos Koll\'ar for his constant support for the second named author and enlightening comments on the proof, Professor Jason Starr for helpful discussions about rational curves on Fano complete intersections, and Professor Chenyang Xu for very encouraging comments during the project.

\end{acknowledgements}


\begin{thebibliography}{AK03}

\bibitem[AK03]{araujo} C. Araujo and J. Koll\'ar,
    Rational Curves on Varieties, Higher Dimensional Varieties and rational points (Budapest 2001), Bolyai Soc. Math. Stud., vol. 12, Springer, Berlin, 2003, pp. 13-68.
\bibitem[BO74]{bloch-ogus} S.Bloch, O.Ogus, Gersten's conjecture and the homology of schemes, Annales scientifiques de l'\'Ecole Normale Suprieure, 4e s\'erie, tome 7, no 2 (1974), p. 181-201.


\bibitem[BM96]{BM}
K. Behrend and Yu. Manin.
\newblock Stacks of stable maps and Gromov-Witten invariants.
\newblock Duke Math. J. Volume 85, Number 1 (1996), 1-60.

\bibitem[dJS03]{starr-jong} A.J. de Jong, J. Starr, Every rationally connected variety over the function field of a curve has a rational point, American Journal of Mathematics, {\bf 125}, 567-580 (2003).

\bibitem[DM69]{deligne-mumford}P. Deligne and D. Mumford, The irreducibility of the space of curves of given genus, Publications Mathématiques de l'IHÉS (Paris), 1969, 36: 75–109.

\bibitem[Flo]{Floris}
E.~Floris.
\newblock Fundamental divisors on Fano varieties of index n-3.
\newblock {\em to appear in Geometriae Dedicata.}

\bibitem[Ful69]{F}W. Fulton, Hurwitz Schemes and Irreducibility of Moduli of Algebraic Curves, Ann. of Math., Vol. 90, No. 3, Nov., 1969, page 542-575.

\bibitem[FP97]{FP}W. Fulton and R. Pandharipande, Notes on stable maps and quantum
cohomology, Notes on stable maps and quantum cohomology. Algebraic
geometry Santa Cruz 1995, 45-96, Proc. Sympos. Pure Math., 62, Part
2, Amer. Math. Soc., Providence, RI, 1997.

\bibitem[GHS03]{ghs} T. Graber, J. Harris and J. Starr, Families of rationally connected varieties.
J. Amer. Math. Soc., 16(1), 57-67  (2003).


\bibitem[Har62]{Connectedness}
R. Hartshorne, Complete Intersections and Connectedness, American Journal of Mathematics, Vol. 84, No. 3 (Jul., 1962), pp. 497-508




\bibitem[HV11]{HoringVoisinFano}
Andreas H{\"o}ring and Claire Voisin.
\newblock Anticanonical divisors and curve classes on {F}ano manifolds.
\newblock {\em Pure Appl. Math. Q.}, 7(4, Special Issue: In memory of Eckart
  Viehweg):1371-1393, 2011.

 \bibitem[Kol96]{kollarbook} J\'{a}nos Koll\'{a}r.
\newblock {\em Rational curves on algebraic varieties}, volume~32 of {\em
  Ergebnisse der Mathematik und ihrer Grenzgebiete. 3. Folge. A Series of
  Modern Surveys in Mathematics [Results in Mathematics and Related Areas. 3rd
  Series. A Series of Modern Surveys in Mathematics]}.
\newblock Springer-Verlag, Berlin, 1996.

 \bibitem[Kol10]{kollarportugaliae} J. Koll\'ar. Holomorphic and pseudo-holomorphic curves on rationally connected varieties, Portugaliae Mathematica, Volume 67, Issue 2, (2010) 155–179.

     \bibitem[Voi06]{voisinuniruled} C. Voisin,  On integral Hodge classes on uniruled and Calabi-Yau
threefolds, in {\it Moduli Spaces and Arithmetic Geometry},
Advanced Studies in Pure Mathematics 45, pp. 43-73, (2006).

\bibitem[Voi12] {voisin}C. Voisin, Remarks on curve classes on rationally connected varieties. arXiv:1201.1072.
\bibitem[Zhu11]{Yi} Y. Zhu, Fano hypersurfaces in positive characteristic.  arXiv:1111.2964.
\bibitem[Zon12a]{zong} Hong R. Zong, Curve classes on rationally connected varieties. arXiv:1207.0575.
\bibitem[Zon12b]{zongb} Hong R. Zong, On the Space of Conics on Complete Intersections. arXiv:1211.1946.


\end{thebibliography}
\end{document}